\theoremstyle{plain}
\newtheorem{Theorem}{Theorem}
\newtheorem{Lemma}[Theorem]{Lemma}
\newtheorem{Corollary}[Theorem]{Corollary}
\newtheorem{Proposition}[Theorem]{Proposition}
\newtheorem{Conjecture}[Theorem]{Conjecture}
\theoremstyle{definition}
\title{On alternative definition of Lucas atoms and their $p$-adic valuations}
\author{Gessica Alecci, Piotr Miska, Nadir Murru, Giuliano Romeo}
\begin{document}

\maketitle

\begin{abstract}
Lucas atoms are irreducible factors of Lucas polynomials and they were introduced in \cite{ST}. The main aim of the authors was to investigate, from an innovatory point of view, when some combinatorial rational functions are actually polynomials.
In this paper, we see that the Lucas atoms can be introduced in a more natural and powerful way than the original definition, providing straightforward proofs for their main properties. Moreover, we fully characterize the $p$-adic valuations of Lucas atoms for any prime $p$, answering to a problem left open in \cite{ST}, where the authors treated only some specific cases for $p \in \{2, 3\}$. Finally, we prove that the sequence of Lucas atoms is not holonomic, contrarily to the Lucas sequence that is a linear recurrent sequence of order two.
\end{abstract}

\section{Introduction}

Lucas sequences of the first kind $(U_n)_{n \geq 0}$ are linear recurrent sequences having characteristic polynomial $X^2 - sX - t$ with initial conditions $0$ and $1$, i.e., they are defined by the recurrence 
\[\begin{cases} U_0 = 0,\quad U_1 = 1 \cr U_n = s U_{n-1} + t U_{n-2}, \quad \forall n \geq 2,\end{cases}\]
where $s$ and $t$ are usually integer numbers. If we consider $s$ and $t$ as two variables, then we talk about Lucas polynomials $U_n(s,t) \in \mathbb N[s, t]$. In \cite{ST}, the authors studied a very interesting factorization for Lucas polynomials connected to cyclotomic polynomials. In particular, they introduced the \emph{Lucas atoms} as the polynomials 
\[P_1(s,t) := 1,\quad P_{n}(s,t) := \Gamma(\Psi_n(q)),\]
for all $n \geq 2$, where $\Psi_n(q)$ is the $n$--th cyclotomic polynomial and $\Gamma$ a map that exploits the gamma expansion of palindromic polynomials.
Given this definition, the authors proved that the following factorization of the Lucas polynomials holds:
\[ U_n(s,t) = \prod_{d \mid  n} P_d(s,t),\]
and moreover $P_n(s,t) \in \mathbb N[s,t]$, for all $n \geq 1$.

This study was firstly motivated by the problem of finding when the rational function
\begin{equation}\label{eq:lanalogue} \cfrac{\prod_i U_{n_i}(s,t)}{\prod_j U_{k_j}(s,t)} \end{equation}
is actually a polynomial. For instance, it has been studied by some authors the case of the so called Lucanomial, which is the generalization of the binomial coefficient to Lucas polynomials:
\[ \binom{U_n(s,t)}{U_k(s,t)} := \cfrac{\prod_{i=1}^nU_i(s,t)}{\prod_{i=1}^kU_i(s,t)\prod_{i=1}^{n-k}U_i(s,t)}, \]
see, e.g., \cite{BP, BCMS}. In fact, thanks to Lucas atoms, the study of when \eqref{eq:lanalogue} is a polynomial can be approached in a straightforward way exploiting the factorization of Lucas polynomials.

The definition of Lucas atoms can be simplified, avoiding the use of the $\Gamma$ map. The idea of factorizing the Lucas polynomials dates back to 1969, when Webb and Parberry \cite{WP} employed it to discuss the irreducibility of the Lucas polynomial $U_n(s,1)$. Subsequently, Levy \cite{Levy} gave the definition of fibotomic polynomials, which turn out to be the Lucas atoms for $t = \pm 1$ and he proved that they are irreducible. Moreover, he made some remarks on their connection with the two-variable homogeneous cyclotomic polynomials, which was already highlighted in a work of Brillhart et al. \cite{BMS}. It is also remarkable that this approach has been already used by Stewart et al. \cite{SS, STE0, STE1, STE2, STE3} in order to obtain estimations on the greatest prime factor of the terms of the Lucas sequence and other reccurent sequences.

Let us consider
\[\Phi_n(\alpha, \beta) := \prod_{\substack{j=1 \\ (j,n)=1}}^n (\alpha - \omega^j \beta),\]
for all $n \geq 1$, where $\omega$ is an $n$--th primitive root of unity. For basic properties of these polynomials we refer to \cite{STE1}.
From this definition, we immediately get that 
\[\alpha^n - \beta^n = \prod_{d\mid n} \Phi_d(\alpha, \beta),\]
and
\[\beta^{\varphi(n)} \Psi_n(\alpha/\beta) = \Phi_n(\alpha, \beta),\]
where $\varphi(\cdot)$ is the Euler's totient function. Then, we can define the Lucas atoms as the polynomials
\begin{equation} \label{eq:def-la}P_1(s,t) := 1, \quad P_n(s,t) := \Phi_n(\alpha, \beta) = \beta^{\varphi(n)} \Psi_n(\alpha/\beta),\end{equation}
for all $n \geq 2$, where $s = \alpha + \beta$ and $t = -\alpha\beta$. In this way, we obtain the factorization of the Lucas polynomials by means of the Lucas atoms (as well as with the definition given in \cite{ST}). Indeed, observing that $\Phi_1(\alpha, \beta) = \alpha - \beta$, we have
\begin{equation*}
\alpha^n - \beta^n = (\alpha - \beta) \prod_{\substack{d\mid n \\ d\not=1}}\Phi_d(\alpha, \beta) = (\alpha - \beta) \prod_{d\mid n} P_d(s,t),
\end{equation*}
remembering that $P_1(s,t) = 1$. Thus, we have
\begin{equation} \label{eq:fatt}U_n(s,t) = \cfrac{\alpha^n - \beta^n}{\alpha - \beta} = \prod_{d\mid n} P_d(s,t)\end{equation}
where $(U_n(s,t))_{n \geq 0}$ has characteristic polynomial $X^2 - sX - t = (X - \alpha)(X - \beta)$. 

In this paper, first we highlight that the definition of Lucas atoms given in \eqref{eq:def-la} is more convenient and straightforward than the original definition proposed in \cite{ST}. Specifically, in Section \ref{sec:rev} we revisit some of the main properties of Lucas atoms, obtaining them with elementary proofs. Section \ref{sec:padic} is devoted to the $p$-adic valuations of Lucas atoms. In general, the $p$-adic valuations for integer sequences is a well studied topic. In particular, the case of Lucas sequences has been deepened by several authors (see, e.g., \cite{BAL, LEN, SAN, WARD}). In \cite{ST}, the authors dealt with Lucas atoms and some divisibility properties by $p = 2, 3$. They left open, addressing it as a hard problem, the extension of these results to arbitrary primes. In Section \ref{sec:padic}, we solve this problem and we completely characterize the $p$-adic valuations of Lucas atoms. Finally, in Section \ref{sec:nonrec}, we exploit the results on the $p$-adic valuations of Lucas atoms to prove that the sequence of Lucas atoms is not holonomic, i.e., it does not satisfy any recurrence relation, also considering coefficients being polynomials, contrarily to the Lucas sequence which is a binary linear recurrent sequence.

\section{Revisiting some properties of Lucas atoms via cyclotomic polynomials} \label{sec:rev}

In this section, we obtain some properties of Lucas atoms exploiting the definition \eqref{eq:def-la}.  Here, we will always consider $s, t, \alpha$ and $\beta$ as variables related by
\[ s = \alpha + \beta, \quad t = -\alpha\beta. \]
First of all, we prove that the Lucas atoms are actually polynomials with natural coefficients.

\begin{Lemma} \label{lemma:ab}
For all $n\geq 0$, we have $\alpha^n + \beta^n \in \mathbb N[s,t]$. 
\end{Lemma}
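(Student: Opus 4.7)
My plan is to set $V_n := \alpha^n + \beta^n$ and prove by strong induction on $n$ that $V_n \in \mathbb{N}[s,t]$. The base cases are immediate: $V_0 = 2 \in \mathbb{N} \subset \mathbb{N}[s,t]$ and $V_1 = \alpha + \beta = s \in \mathbb{N}[s,t]$.

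For the inductive step, the key observation is the identity
\[
\alpha^n + \beta^n \;=\; (\alpha+\beta)(\alpha^{n-1}+\beta^{n-1}) \;-\; \alpha\beta\,(\alpha^{n-2}+\beta^{n-2}),
\]
which is a direct expansion and holds for every $n \geq 2$. Substituting $s = \alpha+\beta$ and $t = -\alpha\beta$, this becomes exactly the second-order linear recurrence
\[
V_n \;=\; s\,V_{n-1} \;+\; t\,V_{n-2},
\]
with coefficients $s, t \in \mathbb{N}[s,t]$. Hence, assuming $V_{n-1}, V_{n-2} \in \mathbb{N}[s,t]$, the right-hand side is a sum of products of elements of $\mathbb{N}[s,t]$, and therefore $V_n \in \mathbb{N}[s,t]$ as well.

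There is no real obstacle here; the statement is essentially the classical fact that the companion Lucas sequence $(V_n)$ satisfies the same recurrence as $(U_n)$ but with initial conditions $V_0 = 2$, $V_1 = s$, so that all its terms are polynomials with nonnegative integer coefficients in $s$ and $t$. The only subtlety worth flagging is that $\alpha$ and $\beta$ are regarded here as formal quantities (or as the roots of $X^2 - sX - t$ over an algebraic closure of $\mathbb{Q}(s,t)$), so that symmetric expressions in $\alpha, \beta$ are polynomials in the elementary symmetric functions $s$ and $-t$; the induction simply witnesses this explicitly and upgrades ``integer coefficients'' to ``nonnegative integer coefficients.''
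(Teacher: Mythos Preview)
Your proof is correct and is in fact considerably simpler than the paper's. The paper proceeds by strong induction as well, but instead of the single recurrence $V_n = sV_{n-1} + tV_{n-2}$ it splits into three cases according to the $2$-adic shape of $n$: for odd $n$ it factors $\alpha^n+\beta^n = (\alpha+\beta)\sum_{i=0}^{n-1}(-1)^i\alpha^{n-1-i}\beta^i$ and pairs terms; for $n=2^k h$ with $h>1$ odd it pulls out $\alpha^{2^k}+\beta^{2^k}$; and for $n=2^k$ it uses $\alpha^{2^k}+\beta^{2^k}=(\alpha^{2^{k-1}}+\beta^{2^{k-1}})^2-2(\alpha\beta)^{2^{k-1}}$ together with an auxiliary induction tracking the monomial $2t^{2^{k-1}}$ to ensure the subtraction does not produce a negative coefficient. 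Your approach bypasses all of this: the two-term recurrence has \emph{both} coefficients $s$ and $t$ lying in $\mathbb N[s,t]$, so no negative sign ever appears and no case analysis is needed. The paper's decomposition does yield somewhat more explicit structural information about the monomials (e.g.\ the presence of $2t^{2^{k-1}}$ in $V_{2^k}$), but for the stated lemma your argument is both shorter and cleaner.
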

\begin{proof}
We prove the Lemma by induction. The first steps are straightforward:
\[\alpha + \beta = s, \quad \alpha^2 + \beta^2 = (\alpha+\beta)^2 - 2\alpha\beta = s^2 + 2t.\]
Now, consider an integer $n > 2$ and suppose $\alpha^i + \beta^i \in \mathbb N[s,t]$ for all $i \leq n-1$. If $n$ is odd, then
\begin{align*}
\alpha^n + \beta^n &= (\alpha + \beta)\sum_{i=0}^{n-1}(-1)^i\alpha^{n-1-i}\beta^i \cr
&= (\alpha + \beta)\Big(\alpha^{n-1} + \beta^{n-1} +(-\alpha \beta)^{(n-1)/2} + \sum_{i=1}^{(n-3)/2}(-\alpha\beta)^i(\alpha^{n-1-2i} + \beta^{n-1-2i})\Big) \cr
&= s\Big(\alpha^{n-1} + \beta^{n-1} + t^{(n-1)/2} + \sum_{i=1}^{(n-3)/2}t^i(\alpha^{n-1-2i} + \beta^{n-1-2i})\Big),
\end{align*}
and by the inductive hypothesis we have $\alpha^n + \beta^n \in \mathbb N[s,t]$. If $n = 2^k h$, with $k>0$ and $h>1$ odd, then we can write
\[ \alpha^n + \beta^n = (\alpha^{2^k} + \beta^{2^k})\sum_{i=0}^{h-1}(-1)^i(\alpha^{2^k})^{h-1-i}(\beta^{2^k})^i, \]
and the thesis follows as above.
Finally, if $n = 2^k$, by induction we can prove that we always have the term $2t^{2^{k-1}}$ in $\alpha^{2^k} + \beta^{2^k}$, indeed we can write
\[ \alpha^{2^k} + \beta^{2^k} = (\alpha^{2^{k-1}} + \beta^{2^{k-1}})^2 - 2 \alpha^{2^{k-1}}\beta^{2^{k-1}}, \]
and consequently we also have $\alpha^{2^k} + \beta^{2^k} \in \mathbb N[s,t]$.
\end{proof}

\begin{Proposition}
For all $n \geq 1$, we have $P_n(s,t) \in \mathbb N[s, t]$.
\end{Proposition}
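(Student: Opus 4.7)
The plan is to combine two complementary expansions of $P_n(s,t)$. For $n=1$ the assertion is immediate, and for $n=2$ we have $P_2(s,t) = \beta(\alpha/\beta+1) = s \in \mathbb{N}[s,t]$. Hence we focus on $n \geq 3$, for which the $\varphi(n)$ integers $j \in \{1,\dots,n-1\}$ with $(j,n)=1$ split into $\varphi(n)/2$ complex-conjugate pairs $\{j, n-j\}$ (indeed $j = n-j$ would force $(j,n) = n/2 > 1$, impossible when $n \geq 3$). Grouping the corresponding linear factors of $\Phi_n(\alpha,\beta)$ gives
\[(\alpha - \omega^j \beta)(\alpha - \omega^{n-j}\beta) = \alpha^2 + \beta^2 - (\omega^j + \omega^{-j})\alpha\beta = s^2 + t\,w_j,\]
where $w_j := 2 + 2\cos(2\pi j/n) \in (0,4)$ is strictly positive. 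Multiplying out yields
\[P_n(s,t) = \prod_{\substack{1 \leq j < n/2 \\ (j,n)=1}} (s^2 + t\,w_j) = \sum_{k=0}^{\varphi(n)/2} e_k(\mathbf{w})\,s^{\varphi(n)-2k}\,t^k,\]
where $e_k$ is the $k$-th elementary symmetric polynomial in $\mathbf{w}=(w_j)_j$. Since every $w_j > 0$, each coefficient $e_k(\mathbf{w})$ is a non-negative real number.

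It remains to verify that these coefficients are integers, i.e., $P_n(s,t) \in \mathbb{Z}[s,t]$. For this I would use that $\Psi_n$ is palindromic for $n \geq 2$: writing $\Psi_n(x) = \sum_{k=0}^{\varphi(n)} c_k x^k$ with $c_k = c_{\varphi(n)-k} \in \mathbb{Z}$, one regroups
\[P_n(s,t) = \sum_{k=0}^{\varphi(n)} c_k\,\alpha^k \beta^{\varphi(n)-k} = \sum_{2k < \varphi(n)} c_k(-t)^k \bigl(\alpha^{\varphi(n)-2k} + \beta^{\varphi(n)-2k}\bigr) + c_{\varphi(n)/2}(-t)^{\varphi(n)/2}.\]
By Lemma \ref{lemma:ab} each $\alpha^m + \beta^m$ lies in $\mathbb{Z}[s,t]$, hence $P_n(s,t) \in \mathbb{Z}[s,t]$. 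Combining the two expansions forces $e_k(\mathbf{w}) \in \mathbb{Z} \cap \mathbb{R}_{\geq 0} = \mathbb{N}$, which is the desired conclusion.

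The main obstacle I anticipate is precisely the leap from $\mathbb{Z}[s,t]$ to $\mathbb{N}[s,t]$: integrality alone is easy to obtain, either from palindromicity as above or from Möbius inversion applied to \eqref{eq:fatt}, but both of these approaches produce alternating sums in which positivity is invisible because the cyclotomic coefficients $c_k$ themselves change sign. The essential trick is to find an alternative expansion in which non-negativity of the coefficients is manifest; pairing the complex-conjugate roots $\omega^{\pm j}$ accomplishes exactly this by turning each quadratic factor into $s^2 + t w_j$ with $w_j > 0$, so that non-negativity comes for free while integrality is imported from the palindromic expansion.
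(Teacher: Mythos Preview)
Your proof is correct and follows essentially the same route as the paper: the paper also first obtains $P_n(s,t)\in\mathbb Z[s,t]$ from the palindromicity of $\Psi_n$ together with Lemma~\ref{lemma:ab}, and then pairs conjugate primitive roots to write $P_n(s,t)=\prod_j\bigl(s^2+(\omega_j+\bar\omega_j+2)t\bigr)$ with $\omega_j+\bar\omega_j+2>0$, exactly your $w_j$, to force non-negativity of the coefficients. Your write-up is slightly more explicit (the elementary symmetric expansion, the check that no $j$ is self-conjugate for $n\geq 3$), but the two arguments are the same.
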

\begin{proof}
From Lemma \ref{lemma:ab}, we have that $P_n(s,t) \in \mathbb Z[s, t]$, for all $n \geq 1$. Indeed, $P_n(s,t) = \beta^{\varphi(n)} \Psi_n(\alpha/\beta)$ and by the palindromicity of the cyclotomic polynomials, we have
\[ P_n(s,t) = \alpha^{\varphi(n)} + \beta^{\varphi(n)} + \sum_{j=1}^{(\varphi(n)-2)/2}c_j (\alpha \beta)^{i_j}(\alpha^{k_j} + \beta^{k_j}),\]
where $c_j \in \mathbb Z$ and $i_j, k_j \in \mathbb N$. Moreover, we can observe that
\[P_n(s,t) = \Phi_n(\alpha, \beta) = \prod_{j=1}^{\varphi(n)/2}(\alpha^2 + \beta^2 - (\omega_j + \bar \omega_j)\alpha\beta)\]
where $\omega_j$ is a primitive $n$-th root of unity and $\bar \omega_j$ is its complex conjugate. Each factor in the above formula can be also written as
\[ (\alpha + \beta)^2 - (\omega_j + \bar\omega_j + 2)\alpha \beta = s^2 +(\omega_j + \bar\omega_j + 2)t, \]
where $\omega_j + \bar\omega_j + 2 > 0$ for all $j$. This ensures that the coefficients of $P_n(s,t)$ must be non-negative integers.
\end{proof}

In \cite{Levy} Levy proved the irreducibility of the univariate Lucas atoms $P_n(s,\pm 1)$. Exploiting the connection with cyclotomic polynomials $\Phi_n(\alpha,\beta)$, we are able to provide the same result for general $t$, i.e. for the Lucas atoms $P_n(s,t)$ in two variables.

\begin{Proposition}\label{Pro: irred}
The Lucas atoms $P_n(s,t)$ are irreducible polynomials over $\mathbb Q$, for all $n\in\mathbb{N}$.
\end{Proposition}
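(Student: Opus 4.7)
The plan is to first establish irreducibility of $\Phi_n(\alpha,\beta)$ in $\mathbb{Q}[\alpha,\beta]$, and then to descend to irreducibility of $P_n(s,t)$ in $\mathbb{Q}[s,t]$ via the substitution $s = \alpha + \beta$, $t = -\alpha\beta$. I restrict to $n \geq 2$, since $P_1 = 1$ is a unit.

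For the first step, I would exploit that $\Phi_n(\alpha,\beta) = \prod_{(j,n)=1}(\alpha - \omega^j \beta)$ is homogeneous of degree $\varphi(n)$, so any factorization $\Phi_n = PQ$ in $\mathbb{Q}[\alpha,\beta]$ may be taken with $P$ and $Q$ homogeneous. Specializing at $\beta = 1$ yields $\Psi_n(\alpha) = P(\alpha,1)\,Q(\alpha,1)$, and the classical irreducibility of the cyclotomic polynomial $\Psi_n$ over $\mathbb{Q}$ forces one of the two factors, say $P(\alpha,1)$, to be a nonzero constant $c$. Homogeneity of $P$ then gives $P(\alpha,\beta) = c\,\beta^{\deg P}$; but $\Phi_n(\alpha,0) = \alpha^{\varphi(n)}$ is nonzero, so $\beta$ does not divide $\Phi_n(\alpha,\beta)$, forcing $\deg P = 0$ and hence the factorization to be trivial.

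For the second step, I would consider the $\mathbb{Q}$-algebra homomorphism $\iota \colon \mathbb{Q}[s,t] \to \mathbb{Q}[\alpha,\beta]$ defined by $s \mapsto \alpha+\beta$ and $t \mapsto -\alpha\beta$, which is injective because the elementary symmetric polynomials in two variables are algebraically independent over $\mathbb{Q}$. A nontrivial factorization $P_n(s,t) = A(s,t)\,B(s,t)$ in $\mathbb{Q}[s,t]$ would push forward to $\Phi_n(\alpha,\beta) = \iota(A)\,\iota(B)$, and the irreducibility established in the previous step would force $\iota(A)$ (or $\iota(B)$) to be a nonzero constant in $\mathbb{Q}$; by injectivity of $\iota$, the corresponding factor $A$ (or $B$) is itself a constant in $\mathbb{Q}[s,t]$, contradicting the nontriviality of the factorization.

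The main obstacle is really concentrated in the first step, and even there it is mild once the irreducibility of $\Psi_n$ over $\mathbb{Q}$ is taken as known: the homogeneity of $\Phi_n$ together with $\beta \nmid \Phi_n$ rule out any nontrivial homogeneous factor whose specialization at $\beta = 1$ collapses to a constant. The descent through $\iota$ is then formal and relies solely on the algebraic independence of the elementary symmetric polynomials.
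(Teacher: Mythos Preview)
Your proof is correct and follows essentially the same strategy as the paper's: first establish that $\Phi_n(\alpha,\beta)$ is irreducible over $\mathbb{Q}$ as the homogenization of the irreducible cyclotomic polynomial $\Psi_n$, then push a hypothetical factorization of $P_n(s,t)$ through the substitution $s\mapsto\alpha+\beta$, $t\mapsto -\alpha\beta$ to contradict that irreducibility. The only difference is that you spell out both steps in full (the homogeneity argument for step one and the injectivity of $\iota$ via algebraic independence of elementary symmetric polynomials for step two), whereas the paper states them more tersely.
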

\begin{proof}
Note that $P_n(s,t)=P_n(\alpha +\beta ,-\alpha\beta )=\Phi_n(\alpha ,\beta )$. The polynomial $\Phi_n$ is irreducible over $\mathbb Q$ as the homogenization of $\Psi_n$, which is irreducible over $\mathbb Q$ as well. Then any factorization of $P_n(s,t)$ into a product of two polynomials $A_n(s,t), B_n(s,t)\in\mathbb Q[s,t]$ can be rewritten as
$$\Phi_n (\alpha ,\beta )=P_n(\alpha +\beta ,-\alpha\beta )=A_n(\alpha +\beta ,-\alpha\beta )B_n(\alpha +\beta ,-\alpha\beta ).$$
Because of the irreducibility of $\Phi_n (\alpha ,\beta )$ over $\mathbb Q$ we conclude that either $A_n(s,t)=A_n(\alpha +\beta ,-\alpha\beta )$ or $B_n(s,t)=B_n(\alpha +\beta ,-\alpha\beta )$ is a constant polynomial. This proves the irreducibility of $P_n(s,t)$ over $\mathbb Q$.
\end{proof}

From Proposition \ref{Pro: irred}, using the irreducibility of the Lucas atoms $P_n(s,t)$, we obtain a simple proof of the following theorem, which is one of the main results of \cite{ST}.

\begin{Theorem}
Let us suppose $f(s,t)=\prod\limits_{d\geq 2} U_{n_i}(s,t)$ and $g(s,t)=\prod\limits_{d\geq 2} U_{k_j}(s,t)$, for $n_i,k_j\in\mathbb{N}$, and write their atomic decompositions as
\[f(s,t)=\prod\limits_{d\geq 2} P_{d}(s,t)^{a_d},\ \ g(s,t)=\prod\limits_{d\geq 2} P_{d}(s,t)^{b_d},\]
for $a_d,b_d\in\mathbb{N}$. Then $f(s,t)/g(s,t)$ is a polynomial if and only if $a_d\geq b_d$ for all $d\geq 2$. Furthermore in this case $f(s,t)/g(s,t)$ has non-negative integer coefficients.
\begin{proof}
The condition $a_d\geq b_d$ is clearly sufficient for $f(s,t)/g(s,t)$ being a polynomial. Conversely, since the polynomials $P_d(s,t)$ are irreducible, by Proposition \ref{Pro: irred}, the Lucas atoms at the denominator cancel out only if they are present at the numerator with a greater or equal exponent. Moreover, the ratio $f(s,t)/g(s,t)$ has nonnegative integer coefficients because it is the product of the remaining Lucas atoms.
\end{proof}
\end{Theorem}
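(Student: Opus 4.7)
The plan is to split the proof into the two implications and then read off the sign condition from the atomic factorization.

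For sufficiency, I would simply note that if $a_d\geq b_d$ for every $d\geq 2$, then the quotient is
\[f(s,t)/g(s,t)=\prod_{d\geq 2}P_d(s,t)^{a_d-b_d},\]
a finite product of Lucas atoms. Since Lemma~\ref{lemma:ab} and the subsequent proposition give $P_d(s,t)\in\mathbb N[s,t]$, this product is itself in $\mathbb N[s,t]$, which simultaneously handles both the polynomiality claim and the non-negative integer coefficient claim.

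For necessity, I would work in the UFD $\mathbb Q[s,t]$. By Proposition~\ref{Pro: irred}, each $P_d(s,t)$ is irreducible over $\mathbb Q$, and for $d\neq d'$ the polynomials $P_d$ and $P_{d'}$ are not associates (since they correspond, via the substitution $s=\alpha+\beta$, $t=-\alpha\beta$, to the distinct irreducible homogeneous cyclotomic polynomials $\Phi_d(\alpha,\beta)$ and $\Phi_{d'}(\alpha,\beta)$). Hence the displayed factorizations are the prime factorizations of $f$ and $g$ in $\mathbb Q[s,t]$, and $g\mid f$ in this ring forces $a_d\geq b_d$ for every $d\geq 2$.

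The only real obstacle is the pairwise non-associateness of the atoms, which I would dispose of quickly by either a degree check in the $(\alpha,\beta)$ variables ($\deg\Phi_d=\varphi(d)$ and distinct $\Phi_d$ are different irreducible factors of $\alpha^{\text{lcm}}-\beta^{\text{lcm}}$), or by invoking the bijective correspondence $P_d\leftrightarrow\Phi_d$ used in Proposition~\ref{Pro: irred}. Everything else is a direct reading of unique factorization combined with the already established membership $P_d(s,t)\in\mathbb N[s,t]$.
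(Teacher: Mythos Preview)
Your proposal is correct and follows essentially the same approach as the paper: sufficiency is immediate from the atomic factorization together with $P_d(s,t)\in\mathbb N[s,t]$, and necessity follows from the irreducibility of the $P_d$ over $\mathbb Q$ and unique factorization in $\mathbb Q[s,t]$. You are slightly more explicit than the paper in singling out the pairwise non-associateness of distinct atoms, a point the paper tacitly assumes; your justification via the correspondence $P_d\leftrightarrow\Phi_d$ is adequate.
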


Lucas formula and Gauss formula for cyclotomic polynomials can be easily adapted to Lucas atoms. For instance, from the Lucas formula we have that, if $n \geq 5$ odd and squarefree, then there exist two palindromic polynomials $C_n(\alpha/\beta), D_n(\alpha/\beta) \in \mathbb Z[\alpha/\beta]$, with degrees $\varphi(n)/2$ and $\varphi(n)/2 - 1$, respectively, such that
\[ \Psi_n((-1)^{(n-1)/2}\alpha/\beta) = C_n^2(\alpha/\beta) - n \frac{\alpha}{\beta}D_n^2(\alpha/\beta). \]
Thus, if $n \equiv 1 \pmod 4$, we obtain 
\[\beta^{\varphi(n)}\Psi_n(\alpha/\beta) = \beta^{\varphi(n)}C_n^2(\alpha/\beta) - n \alpha \beta^{\varphi(n)-1} D_n^2(\alpha/\beta) = \tilde C_n^2(\alpha, \beta) - n \alpha\beta \tilde D_n^2(\alpha, \beta),\]
where $\tilde C_n(\alpha, \beta), \tilde D_n(\alpha, \beta) \in \mathbb Z[\alpha, \beta]$ are palindromic with degrees $\varphi(n)/2$ and $\varphi(n)/2 - 1$, respectively. Then, by palindromicity of these polynomials and by Lemma \ref{lemma:ab} we get
\[ P_n(s,t) = F_n^2(s,t) + n t G_n^2(s,t) \]
with $F_n(s,t), G_n(s,t) \in \mathbb Z[s,t]$. Similar results can be obtained for the case $n$ even and for the Gauss formula. The same results were obtained in a simple way in \cite{ST}, but with the definition \eqref{eq:def-la} of the Lucas atoms, it becomes more clear the fact that the Lucas formula, for $n$ odd, can not be adapted when $n \equiv 3 \pmod 4$. Indeed, in this case we would have
\[ \beta^{\varphi(n)}\Psi_n(-\alpha/\beta) = \beta^{\varphi(n)}C_n^2(\alpha/\beta) - n \alpha \beta^{\varphi(n)} D_n^2(\alpha/\beta) \]
where on the left side we have the Lucas atoms $P_n(s,t)$ but where $s = -\alpha + \beta$ and $t = \alpha\beta$ making not possible to obtain on the right hand some polynomials with these variables $s$ and $t$ and integer coefficients.

In \cite{ST}, the authors proved also an analogue of a reduction formula for cyclotomic polynomials. In order to provide a proof of this result, the authors proved several combinatorial lemmas, claiming that a proof could not be found easily and directly from the connection with cyclotomic polynomials provided by the function $\Gamma$. Here we show that, exploiting \eqref{eq:def-la}, the proof becomes straightforward.

\begin{Theorem}\label{Thm: ST}
If $n\geq 2$ is a positive integer and $p$ is a prime not dividing $n$, then
\begin{align*}
P_{pn}(s,t)=\begin{cases} \dfrac{P_n(s^2+2t,-t^2)}{P_n(s,t)}\ \  &\text{if } p=2 \\
\dfrac{P_n(sP_{2p},t^p)}{P_n(s,t)} &\text{if } p\geq 3,
\end{cases}
\end{align*}
where writing $P_m$ we mean $P_m(s,t)$.
\end{Theorem}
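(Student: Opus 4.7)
The plan is to start from the classical cyclotomic identity
\[\Psi_n(x^p)=\Psi_n(x)\,\Psi_{pn}(x),\]
valid for every prime $p$ with $p\nmid n$. Substituting $x=\alpha/\beta$ and multiplying through by $\beta^{p\varphi(n)}$, and using $\varphi(pn)=(p-1)\varphi(n)$, this homogenizes to
\[\Phi_n(\alpha^p,\beta^p)=\Phi_n(\alpha,\beta)\cdot\Phi_{pn}(\alpha,\beta).\]
Dividing by $\Phi_n(\alpha,\beta)=P_n(s,t)$ and re-reading the left-hand side as $P_n$ evaluated at the elementary symmetric functions of $\alpha^p,\beta^p$ gives the master formula
\[P_{pn}(s,t)=\frac{P_n(\alpha^p+\beta^p,\,-\alpha^p\beta^p)}{P_n(s,t)}.\]
Both cases of the theorem then reduce to expressing the two arguments $\alpha^p+\beta^p$ and $-\alpha^p\beta^p$ purely in terms of $s$, $t$, and possibly other Lucas atoms.

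The second argument is immediate from $\alpha\beta=-t$: we have $-\alpha^p\beta^p=-(-t)^p$, which is $-t^2$ when $p=2$ and $t^p$ when $p$ is an odd prime, matching the two cases in the statement. The first argument is trivial for $p=2$, since Lemma~\ref{lemma:ab} already gives $\alpha^2+\beta^2=s^2+2t$. For odd $p$ I would compute
\[\alpha^p+\beta^p=\frac{\alpha^{2p}-\beta^{2p}}{\alpha^p-\beta^p}=\frac{U_{2p}(s,t)}{U_p(s,t)}\]
and then invoke the atomic factorization \eqref{eq:fatt}: the divisors of $2p$ are $\{1,2,p,2p\}$ while those of $p$ are $\{1,p\}$, so this quotient telescopes to $P_2(s,t)\cdot P_{2p}(s,t)=s\cdot P_{2p}(s,t)$, which is exactly the quantity appearing in the odd-prime case.

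The main obstacle I expect is purely bookkeeping. The cyclotomic identity is classical, but one has to be careful in the homogenization step to match the powers of $\beta$ on the two sides, and when rewriting $\Phi_n(\alpha^p,\beta^p)$ in $P_n$-notation one has to remember that the second variable of $P_n$ carries a minus sign relative to $\alpha\beta$, so the parity of $p$ flips a sign in the $t$-argument. Lemma~\ref{lemma:ab} ensures that both $\alpha^p+\beta^p$ and $-\alpha^p\beta^p$ are genuine polynomials in $s,t$ (or, for odd $p$, in $s,t$ and the atom $P_{2p}$), so the substitution into $P_n$ is legitimate. Once these points are handled, the theorem is a one-line consequence of the homogenized cyclotomic identity combined with \eqref{eq:fatt}, bypassing entirely the combinatorial lemmas used in \cite{ST}.
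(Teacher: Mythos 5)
Your proposal is correct and follows essentially the same route as the paper: the homogeneous reduction formula $\Phi_{pn}(\alpha,\beta)=\Phi_n(\alpha^p,\beta^p)/\Phi_n(\alpha,\beta)$, the identification $\Phi_n(\alpha^p,\beta^p)=P_n(\alpha^p+\beta^p,-(-t)^p)$, and for odd $p$ the evaluation $\alpha^p+\beta^p=U_{2p}/U_p=P_2P_{2p}=sP_{2p}$ via the atomic factorization \eqref{eq:fatt}. The only cosmetic difference is that you derive the reduction formula by homogenizing $\Psi_n(x^p)=\Psi_n(x)\Psi_{pn}(x)$, whereas the paper cites it directly in the two-variable form.
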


\begin{proof}
By \eqref{eq:def-la} and the reduction formulas for cyclotomic polynomials,
\begin{equation}\label{Eq: reduction}
P_{pn}(s,t)=\Phi_{pn}(\alpha,\beta)=\frac{\Phi_{n}(\alpha^p,\beta^p)}{\Phi_{n}(\alpha,\beta)}=\frac{\Phi_{n}(\alpha^p,\beta^p)}{P_n(s,t)}.
\end{equation}
Now let us notice that $\alpha^p$ and $\beta^p$ are the roots of the polynomial
\[(X-\alpha^p)(X-\beta^p)=X^2-(\alpha^p+\beta^p)X+(\alpha\beta)^p,\]
hence the Lucas atom correspondent to $\Phi_{n}(\alpha^p,\beta^p)$ is
\[P_n(\alpha^p+\beta^p,-(\alpha\beta)^p)=P_n(\alpha^p+\beta^p,-(-t)^p).\]
If $p=2$,
\[\Phi_{n}(\alpha^2,\beta^2)=P_n(\alpha^2+\beta^2,-t^2)=P_n(s^2+2t,-t^2),\]
and this concludes the proof for this case. For $p\geq 3$, let us notice that
\[U_{2p}(s,t)=\frac{\alpha^{2p}-\beta^{2p}}{\alpha-\beta}=\frac{\alpha^p-\beta^p}{\alpha-\beta} (\alpha^p+\beta^p)=P_p(s,t)(\alpha^p+\beta^p).\]
By the definition of Lucas atoms, this means that
\[\alpha^p+\beta^p=P_2P_{2p}=(\alpha+\beta)P_{2p}=sP_{2p}.\]
Therefore,
\[\Phi_{n}(\alpha^p,\beta^p)=P_n(\alpha^p+\beta^p,-(-t)^p)=P_n(sP_{2p},t^p),\]
and also the case of odd $p$ is complete.
\end{proof}

Using a similar argument, it is easily obtained also the following theorem, which is the other main result of Section 5 in \cite{ST}.

\begin{Theorem}\label{Thm: ST2}
If $n\geq 2$ is a positive integer and $p$ is a prime not dividing $n$, then,
\begin{align*}
P_{p^m n}(s,t)=\begin{cases} P_{p^{m-1}n}(s^2+2t,-t^2)\ \  &\text{if } p=2  \\
P_{p^{m-1n}}(sP_{2p},t^p) &\text{if } p\geq 3,
\end{cases}
\end{align*}
for all $m\geq 2$.
\end{Theorem}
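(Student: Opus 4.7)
The strategy is to mimic the proof of Theorem \ref{Thm: ST}, but to use the \emph{other} reduction formula for cyclotomic polynomials, namely $\Psi_{pk}(x) = \Psi_k(x^p)$, which is valid precisely when $p \mid k$. Since the hypothesis $m \geq 2$ forces $p \mid p^{m-1} n$, this identity applies with $k = p^{m-1} n$, giving $\Psi_{p^m n}(x) = \Psi_{p^{m-1} n}(x^p)$. This plays the role of equation \eqref{Eq: reduction}, but with no denominator on the right-hand side.

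I would then pass from $\Psi$ to $\Phi$ by homogenization. Using $\varphi(p^m n) = p\,\varphi(p^{m-1} n)$, which holds whenever $m \geq 2$ and $p \nmid n$, this yields
\[\Phi_{p^m n}(\alpha,\beta) = \beta^{p\varphi(p^{m-1} n)} \Psi_{p^{m-1} n}\!\left((\alpha/\beta)^p\right) = \Phi_{p^{m-1} n}(\alpha^p,\beta^p).\]
Exactly as in the proof of Theorem \ref{Thm: ST}, $\alpha^p$ and $\beta^p$ are the roots of $X^2 - (\alpha^p + \beta^p)X + (\alpha\beta)^p$, so by \eqref{eq:def-la} the right-hand side equals $P_{p^{m-1} n}\!\left(\alpha^p + \beta^p,\, -(\alpha\beta)^p\right) = P_{p^{m-1} n}\!\left(\alpha^p + \beta^p,\, -(-t)^p\right)$, since $\alpha\beta = -t$.

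The two cases are then dispatched using computations already performed in the proof of Theorem \ref{Thm: ST}. For $p = 2$, one has $\alpha^2 + \beta^2 = s^2 + 2t$ and $-(-t)^2 = -t^2$, giving the first branch. For $p \geq 3$ odd, the identity $\alpha^p + \beta^p = s\,P_{2p}(s,t)$ established there applies, while $-(-t)^p = t^p$ for odd $p$, giving the second branch.

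I do not anticipate a genuine obstacle here. The only subtlety is recognizing that the easier cyclotomic reduction (without a denominator) is the correct one to use, because $p$ already divides $p^{m-1} n$ whenever $m \geq 2$. No new combinatorial input beyond that of Theorem \ref{Thm: ST} is required, and the entire prime-power exponent $m$ is absorbed by a single application of the $p$-th power cyclotomic identity.
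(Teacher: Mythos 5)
Your proposal is correct and is exactly the argument the paper has in mind: the paper omits the proof with the remark that it follows ``using a similar argument'' to Theorem \ref{Thm: ST}, and your write-up makes that argument explicit by invoking the denominator-free reduction $\Psi_{pk}(x)=\Psi_k(x^p)$ (valid since $p\mid p^{m-1}n$ for $m\geq 2$), homogenizing to get $\Phi_{p^m n}(\alpha,\beta)=\Phi_{p^{m-1}n}(\alpha^p,\beta^p)$, and reusing the identities $\alpha^2+\beta^2=s^2+2t$ and $\alpha^p+\beta^p=sP_{2p}$ from the proof of Theorem \ref{Thm: ST}. No gaps; the key observation that the $m\geq 2$ hypothesis selects the reduction formula without a denominator is precisely the right one.
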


If $\Phi_n(a,b)=p$ for some index $n$, $p$ prime and $a, b$ integers, then 
\[P_n(a+b,-ab)=\Phi_n(a,b)=p,\]
where $a+b$ and $-ab$ are still integers. For $b=1$, a famous conjecture of Bunyakovsky implies that, for a fixed $n$, $\Phi_n(a,1)=\Psi_n(a)$ is prime for infinitely many positive integers $a$. In light of this, we can state the following conjecture which is weaker than the Bunyakovsky one.

\begin{Conjecture}
For each integer $n\geq 2$ there exist infinitely many pairs of integers $(s,t)$ such that $P_n(s,t)$ is prime. 
\end{Conjecture}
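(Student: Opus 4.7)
The plan is to first dispose of the small cases $n \in \{2, 3, 4, 6\}$, where $\varphi(n) \leq 2$ and $P_n(s,t)$ is linear in $t$: a direct computation gives $P_2 = s$, $P_3 = s^2 + t$, $P_4 = s^2 + 2t$, and $P_6 = s^2 + 3t$. For each, I would fix an integer $s_0$ coprime to the leading coefficient in $t$ and let $t$ vary; the resulting arithmetic progression contains infinitely many primes by Dirichlet's theorem, producing infinitely many valid pairs $(s_0, t)$.

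For $n \geq 5$ with $n \neq 6$ this easy approach collapses, since $P_n(s_0, t)$ is then a polynomial in $t$ of degree $\varphi(n)/2 \geq 2$. The most direct next move is to restrict to the subfamily $(s,t) = (a+1, -a)$, corresponding to $\alpha = a, \beta = 1$, which gives $P_n(a+1, -a) = \Phi_n(a, 1) = \Psi_n(a)$; Bunyakovsky's conjecture applied to $\Psi_n$ would then immediately conclude, but this is precisely the direction the authors want to avoid. To leverage the two-variable freedom instead, I would use Proposition \ref{Pro: irred} to ensure $P_n(s,t)$ is irreducible over $\mathbb{Q}$, then invoke Hilbert's irreducibility theorem to pick $s_0 \in \mathbb{Z}$ for which the specialization $P_n(s_0, t) \in \mathbb{Q}[t]$ remains irreducible; combined with a verification that the values $P_n(s_0, t)$ have no fixed prime divisor, Schinzel's Hypothesis H would finish the argument.

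The main obstacle is that both Bunyakovsky's conjecture and Schinzel's Hypothesis H remain unproven, and no unconditional prime-value theorem is currently known for irreducible polynomials of degree $\geq 2$, in one or several variables. To settle the conjecture unconditionally one would have to exploit something specific to the cyclotomic structure of $P_n(s,t)$, perhaps a sieve argument built on the factorization $U_n(s,t) = \prod_{d \mid n} P_d(s,t)$ together with the reduction formulas of Theorems \ref{Thm: ST} and \ref{Thm: ST2}, or an adaptation of Green--Tao--Ziegler-type techniques to the bivariate weighted-homogeneous setting. This is where I expect the genuine difficulty to lie, and I suspect that my proposal would give only a conditional resolution for $n \geq 5$, $n \neq 6$, leaving the conjecture open unconditionally in that range.
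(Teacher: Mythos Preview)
Your analysis is accurate, and in fact it aligns with the paper's own treatment: this statement is a \emph{Conjecture} in the paper, not a theorem, and the authors do not prove it. Their discussion after the conjecture is essentially the same as yours: they observe that $P_2$, $P_3$, $P_4$ represent all integers and $P_6$ represents all integers $\not\equiv 2\pmod 3$, hence infinitely many primes in each case; for all other $n$ (where $\deg P_n\geq 4$) they explicitly write that ``we do not know any tool for proving that they represent infinitely many prime numbers.''

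One minor difference: for the small cases $n\in\{2,3,4,6\}$ the paper's argument is slightly more elementary than yours. Rather than fixing $s_0$ and invoking Dirichlet on the resulting arithmetic progression in $t$, they simply note that these linear-in-$t$ polynomials surject onto $\mathbb Z$ (or onto the residue classes $\not\equiv 2\pmod 3$ in the case of $P_6$), so they hit every prime (or every prime $\neq 3$ of a suitable class) directly. Your Dirichlet argument is of course also valid. For $n\geq 5$, $n\neq 6$, your assessment that only a conditional resolution via Bunyakovsky or Schinzel's Hypothesis~H is available matches the paper's position exactly; there is no unconditional proof here to compare against.
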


It is easy to see that the polynomials $P_2(s,t)=s$, $P_3(s,t)=s^2+t$ and $P_4(s,t)=s^2+2t$ represent all the integers, in particular all the prime numbers. The polynomial $P_6(s,t)=s^2+3t$, meanwhile, represents all the integers not congruent to $2$ modulo $3$, in particular all the prime numbers of this form. For remaining polynomials $P_n(s,t)$, whose degrees are at least equal to $4$, we do not know any tool for proving that they represent infinitely many prime numbers.

\section{$p$--adic valuations of Lucas atoms} \label{sec:padic}

In this section, we fully characterize the $p$-adic valuation of Lucas atoms. In this way we solve a problem left open in \cite{ST}, which the authors addressed as hard. In particular, they treated only some cases for $p \in \{2, 3\}$ (see \cite[Theorems 6.3, 6.5]{ST}), leaving open the general problem of extending their results to arbitrary primes.

In the following, we consider $s, t$ as integers and $\alpha, \beta$ as the roots of the polynomial $X^2 - sX - t$ and we will denote by $\Delta$ its discriminant. 

Given an integer $n\neq 0$, let $\rho(n, U)$ be the rank of appearance of $n$ in the sequence $(U_m)_{m \geq 0}$, i.e., the minimum positive integer $k$ such that $n\mid U_k$. The next results are useful known properties of the rank of appearance (see, e.g., \cite{RIB}). We give the proofs for completeness.

\begin{Lemma}\label{Lem: PropRank}
Given an integer $n\neq 0$, if $\gcd(n, t) = 1$, we have that $\rho(n, U) \mid  m$ if and only if $n \mid  U_m$.
\end{Lemma}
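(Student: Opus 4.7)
The plan is to prove the two implications separately; only the backward direction will actually use the hypothesis $\gcd(n,t)=1$.

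For the forward direction $\rho(n,U)\mid m \Rightarrow n\mid U_m$, I would first establish the general divisibility $U_k\mid U_{jk}$ for all positive integers $j,k$. Starting from
\[\alpha^{jk}-\beta^{jk}=(\alpha^k-\beta^k)\sum_{i=0}^{j-1}\alpha^{k(j-1-i)}\beta^{ki},\]
the second factor on the right is symmetric in $\alpha$ and $\beta$, hence a polynomial in $s,t$ with integer coefficients; dividing through by $\alpha-\beta$ shows that $U_{jk}/U_k$ is an integer. Applying this with $k=\rho(n,U)$ and $jk=m$ yields $n\mid U_\rho\mid U_m$.

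For the backward direction, I would invoke the addition formula
\[U_{a+b}=U_{a+1}U_b+tU_aU_{b-1},\]
provable by a short induction on $b$ from the defining recurrence. Writing $m=q\rho+r$ with $0\le r<\rho$, and assuming $r\ge 1$ for contradiction, the identity with $a=q\rho$, $b=r$ gives $U_m=U_{q\rho+1}U_r+tU_{q\rho}U_{r-1}$. Since $n\mid U_{q\rho}$ by the forward direction and $n\mid U_m$ by hypothesis, it follows that $n\mid U_{q\rho+1}U_r$.

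The crux, and the only place where $\gcd(n,t)=1$ is used, is to show that $\gcd(n,U_{q\rho+1})=1$. If some prime $p$ divided both, then $p\mid U_{q\rho}$ (since $p\mid n\mid U_{q\rho}$) and $p\nmid t$ (since $p\mid n$ and $\gcd(n,t)=1$); a downward induction using $tU_{k-1}=U_{k+1}-sU_k$ starting from $k=q\rho$ would then force $p\mid U_1=1$, a contradiction. Hence $n\mid U_r$, contradicting the minimality of $\rho$ since $0<r<\rho$, so $r=0$. The coprimality argument above is the only nonroutine step; everything else is a formal manipulation of the Lucas recurrence.
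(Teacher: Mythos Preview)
Your proof is correct, but it follows a quite different path from the paper's. You argue purely arithmetically: the forward direction uses the strong divisibility $U_k\mid U_{jk}$ via the factorisation of $\alpha^{jk}-\beta^{jk}$, and the backward direction combines the addition formula $U_{a+b}=U_{a+1}U_b+tU_aU_{b-1}$ with Euclidean division and a downward induction to force $\gcd(n,U_{q\rho+1})=1$. This is the classical treatment one finds, for instance, in Ribenboim, and it has the virtue of isolating exactly where $\gcd(n,t)=1$ enters (only in the descent step).

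The paper instead gives a group-theoretic proof: it works in the ring $R=\mathbb{Z}_n[X]/(X^2-sX-t)$, shows inductively that $X^m=T_m+U_mX$ there, and observes that $\gcd(n,t)=1$ makes $X$ a unit. Then $\rho(n,U)$ is identified with the order of the class $[X]$ in $G=R^*/\mathbb{Z}_n^*$, so that $n\mid U_m \iff [X]^m=1 \iff \rho(n,U)\mid m$ by the basic property of orders in a group. This packaging handles both implications at once and yields the existence of $\rho(n,U)$ (for $\gcd(n,t)=1$) as a by-product, something your argument does not address directly---though for the lemma as stated it is implicit once one assumes $n\mid U_m$ for some $m$. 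Your approach is more elementary and self-contained; the paper's is more conceptual and economical.
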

\begin{proof}
Let $f(X) = X^2 - sX - t$ be the characteristic polynomial of $(U_k)_{k\geq0}$ and we also define the linear recurrent sequence $(T_k)_{k \geq 0}$ with characteristic polynomial $f(X)$ and initial conditions $(1,0)$. Furthermore, define the ring $R:= \faktor{\mathbb Z_n[X]}{(f(X))}$ and the group $G:= \faktor{R^*}{\mathbb Z_n^*}$. We denote by $[a + bX]$ the elements of $G$.

In $R$, we can prove by induction that
\[ X^m = T_m + U_m X \]
for all $m \geq 0$. The base cases hold:
\[ X^0 = T_0 + U_0 X , \quad X^1 = T_1 + U_1X. \]
Then, 
\[X^m = X^{m-1}X = T_{m-1}X + U_{m-1}X^2 = T_{m-1}X + U_{m-1}(sX+t).\]
Considering that $T_m = tU_{m-1}$, for all $m \geq 2$, we obtain
\[X^m = t U_{m-1} + (sU_{m-1}+T_{m-1})X = T_m + (sU_{m-1} + tU_{m-2})X = T_m + U_mX.\]
Since $\gcd(n, t) = 1$, we can observe that $X \in R^*$ and we prove now that the order of $[X]$ in $G$ is $\rho(n, U)$. Indeed,
\begin{align*}
    \text{ord}_G [X] &= \min \{ k \in \mathbb N_+ : [X]^k = 1 \} = \min\{ k \in \mathbb N_+ : X^k \in \mathbb Z_n^*\}\\
    &= \min\{ k \in \mathbb N_+: T_k + U_k X \in \mathbb Z_n^* \} = \min\{ k \in \mathbb N_+: U_k \equiv 0 \pmod n \} \\
    &= \min\{ k \in \mathbb N_+: n \mid  U_k \} = \rho(n, U).
\end{align*}
By the same reasoning we see that $n \mid  U_m$ if and only if $[X]^m=1$. By the property of the order of an element in a group we know that $[X]^m=1$ exactly when $\text{ord}_G[X]=\rho(n,U)\mid m$. The equality $\text{ord}_G[X]=\rho(n,U)$ also proves that $\rho(n, U)$ exists for all $n$ under the hypothesis of the lemma.
\end{proof}

\begin{Lemma} \label{lemma:leg}
Given a prime number $p$ such that $p \nmid t$, the rank of appearance $\rho(p, U)$ divides $p-\left(\frac{\Delta}{p}\right)$, where $\left(\frac{\Delta}{p}\right)$ is the Legendre symbol. 
\end{Lemma}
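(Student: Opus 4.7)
The plan is to reuse the ring-theoretic framework from the proof of Lemma~\ref{Lem: PropRank} with $n=p$. Setting $R=\mathbb Z_p[X]/(f(X))$ (a ring over the field $\mathbb Z_p=\mathbb F_p$) and $G=R^*/\mathbb Z_p^*$, that proof shows $\rho(p,U)=\mathrm{ord}_G([X])$; the hypothesis $p\nmid t$ ensures $[X]\in R^*$ since the constant term of $f$ is a unit modulo $p$. Because the order of an element divides the order of the group, it suffices to prove
\[|G|=p-\left(\tfrac{\Delta}{p}\right),\]
adopting the convention that the Legendre symbol equals $0$ when $p\mid\Delta$.

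I would split into three cases depending on how $f(X)=X^2-sX-t$ factors over $\mathbb F_p$. If $\left(\tfrac{\Delta}{p}\right)=1$, then $f$ splits as a product of two distinct linear factors; by the Chinese Remainder Theorem, $R\cong\mathbb F_p\times\mathbb F_p$, so $|R^*|=(p-1)^2$, and modding out by the diagonally embedded $\mathbb Z_p^*$ yields $|G|=p-1$. If $\left(\tfrac{\Delta}{p}\right)=-1$, then $f$ is irreducible over $\mathbb F_p$, so $R\cong\mathbb F_{p^2}$, $|R^*|=p^2-1$, and $|G|=p+1$.

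For the remaining case $p\mid\Delta$, one has $f\equiv(X-a)^2\pmod p$ for some $a\in\mathbb F_p$; matching constant terms gives $t\equiv-a^2\pmod p$, and $p\nmid t$ forces $a\ne 0$. The translation $X\mapsto a+\varepsilon$ induces an isomorphism $R\cong\mathbb F_p[\varepsilon]/(\varepsilon^2)$, whose units are precisely the elements $c+d\varepsilon$ with $c\ne 0$, numbering $p(p-1)$; hence $|G|=p(p-1)/(p-1)=p$. In every case $|G|=p-\left(\tfrac{\Delta}{p}\right)$, completing the proof.

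The main delicacy is the third case, where $f$ has a repeated root and $R$ is no longer semisimple: one cannot invoke the Chinese Remainder Theorem or standard finite-field theory, and instead must identify $R$ with the ring of dual numbers and count its units by hand. Everything else reduces to routine manipulations once the ring-theoretic reformulation from Lemma~\ref{Lem: PropRank} is in place; in particular, the potentially awkward prime $p=2$ requires no separate treatment, since $p\nmid t$ forces $t$ to be odd and thereby excludes the split case, leaving only the two remaining scenarios covered by the argument above.
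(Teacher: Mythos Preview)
Your argument is correct, but it follows a genuinely different path from the paper's proof. The paper proceeds by direct computation of $U_{p-(\Delta/p)}\pmod p$ in each of the three cases: when $p\mid\Delta$ it writes $U_p=\sum_{j=0}^{p-1}\alpha^{p-1-j}\beta^j\equiv p\alpha^{p-1}\equiv 0$; when $p\nmid\Delta$ it diagonalizes the companion matrix $L=\begin{pmatrix}s&t\\1&0\end{pmatrix}$ over $\mathbb F_p$ or $\mathbb F_{p^2}$ and then invokes Fermat's little theorem (split case) or the Frobenius automorphism $\alpha\mapsto\alpha^p=\beta$ (inert case) to conclude $U_{p-1}\equiv 0$ or $U_{p+1}\equiv 0$. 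You, by contrast, never touch $U_{p-(\Delta/p)}$ directly: you identify the isomorphism type of $R$ in each case, read off $|G|=|R^*|/(p-1)$, and let Lagrange's theorem do the work. Your route is a more natural continuation of the group-theoretic setup from Lemma~\ref{Lem: PropRank} and handles all three cases uniformly---including the repeated-root case via the ring of dual numbers, which the paper treats by an ad hoc calculation; the paper's route is more hands-on and avoids classifying $R$, at the cost of needing separate tools (diagonalization, Frobenius) for each case.
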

\begin{proof}
If $p \mid  \Delta$, then $\alpha \equiv \beta \pmod p$ and $U_p = \sum_{j=0}^{p-1} \alpha^{p-1-j}\beta^j \equiv p\alpha^{p-1} \equiv 0 \pmod p$, i.e., $\rho(p, U) = p$.
Considering
\[L = \begin{pmatrix} s &  t \cr 1 & 0 \end{pmatrix},\]
we have that 
\[L^n\begin{pmatrix} 1 \cr 0 \end{pmatrix} = \begin{pmatrix} U_{n+1} \cr U_n  \end{pmatrix}\]
for all $n \geq 0$. For $p \nmid \Delta$, the matrix $L$ is similar to the diagonal matrix
\[\begin{pmatrix} \alpha &  0 \cr 0 & \beta \end{pmatrix}.\]
If $\left(\frac{\Delta}{p}\right) = -1$, then by the Frobenius morphism, we have 
\[ \begin{pmatrix} U_{p+2} \cr U_{p+1}  \end{pmatrix} = L^p \cdot L \begin{pmatrix} 1 \cr 0 \end{pmatrix} \equiv \begin{pmatrix} \alpha\beta &  0 \cr 0 & \alpha\beta \end{pmatrix}\begin{pmatrix} 1 \cr 0 \end{pmatrix} = -t \begin{pmatrix} 1 \cr 0 \end{pmatrix} \pmod p \]
as $\alpha ,\beta\in\mathbb F_{p^2}\backslash\mathbb F_p$ and $L^p$ is similar to the matrix
\[\begin{pmatrix} \alpha^p &  0 \cr 0 & \beta^p \end{pmatrix}\equiv\begin{pmatrix} \beta &  0 \cr 0 & \alpha \end{pmatrix}\]
via the same similarity matrix as $L$ and $\begin{pmatrix} \alpha &  0 \cr 0 & \beta \end{pmatrix}$.

If $\left(\frac{\Delta}{p}\right) = 1$, then by the Fermat's little theorem, we have
\[ \begin{pmatrix} U_{p} \cr U_{p-1}  \end{pmatrix} = L^{p-1} \begin{pmatrix} 1 \cr 0 \end{pmatrix} \equiv \begin{pmatrix} 1 \cr 0 \end{pmatrix} \pmod p \]
as $\alpha ,\beta\in\mathbb F_p$.
\end{proof}

Similarly to the rank of appearance of a non-zero integer in the Lucas sequence, we will denote by $\rho(n, P)$ the rank of appearance of the integer $n\neq 0$ in the sequence of Lucas atoms $(P_m)_{m \geq 1}$.
In the following lemma we prove that the rank of appearance of a given prime number in a Lucas sequence is the same for the corresponding sequence of Lucas atoms.

\begin{Lemma}\label{Lem: RankP}
Given a prime $p$, we have
\[\rho(p, P)=\rho(p, U) = k\]
and
\[v_p(U_{k}) = v_p(P_k),\]
where $v_p(\cdot)$ denotes the $p$-adic valuation.
\begin{proof}
Consider $k = \rho(n, U)$. By the definition of rank of appearance we have that $p \mid  U_k$ and $p \nmid U_d$ for any $d<k$. If $p$ divides $P_d$ for some $d<k$, then it divides $U_d$ and this is a contradiction. Therefore, the rank of appearance of $p$ in the sequence of Lucas atoms must be greater than or equal to $k$. Moreover,
\[v_p(U_k)=\sum\limits_{d\mid k}v_p(P_d)=v_p(P_k),\]
so that $\rho(p, P)= k$ and $v_p(U_k)=v_p(P_k)$.
\end{proof}
\end{Lemma}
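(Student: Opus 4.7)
The plan is to read off both claims from the atomic factorization $U_n(s,t) = \prod_{d \mid n} P_d(s,t)$ of \eqref{eq:fatt}. After specializing $s, t$ to integers, this becomes an identity in $\mathbb Z$, and taking $p$-adic valuations gives the additive relation
\[ v_p(U_n) = \sum_{d \mid n} v_p(P_d) \]
for every prime $p$ and every $n \geq 1$. In particular $P_d \mid U_d$ in $\mathbb Z$, which is the single divisibility fact we need.

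First I would set $k := \rho(p, U)$ and check $\rho(p, P) \geq k$: since $P_d \mid U_d$, any $d$ with $p \mid P_d$ also has $p \mid U_d$, hence $d \geq k$ by the minimality defining $\rho(p, U)$. For the reverse inequality, I would apply the displayed identity at $n = k$; the left-hand side is positive because $p \mid U_k$, so some summand $v_p(P_d)$ with $d \mid k$ is positive, and the previous step forces $d = k$. This gives $\rho(p, P) = k$.

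The valuation equality then follows from the same identity at $n = k$: for every proper divisor $d$ of $k$ we have $d < k$, so minimality of $\rho(p, U)$ gives $p \nmid U_d$, and hence $v_p(P_d) = 0$ since $P_d \mid U_d$. Only the term $d = k$ survives, yielding $v_p(U_k) = v_p(P_k)$. No real obstacle arises; the argument is an immediate consequence of \eqref{eq:fatt} once one observes that the polynomial identity persists under integer specialization, and that the rank $\rho(p,U)$ is ensured to exist (at least when $p \nmid t$) by Lemma \ref{lemma:leg}.
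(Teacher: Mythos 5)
Your proposal is correct and follows essentially the same route as the paper: both arguments rest on the specialization of the factorization $U_n(s,t)=\prod_{d\mid n}P_d(s,t)$ to integers, the resulting divisibility $P_d\mid U_d$ to show no $d<k$ can have $p\mid P_d$, and the valuation identity $v_p(U_k)=\sum_{d\mid k}v_p(P_d)$ at $n=k$ to conclude $\rho(p,P)=k$ and $v_p(U_k)=v_p(P_k)$. No substantive difference; your remark that existence of $\rho(p,U)$ needs $p\nmid t$ is a fair (and slightly more careful) aside, already covered by Lemma \ref{Lem: PropRank}.
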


For studying the $p$-adic valuations of Lucas atoms, we use the following results of Ballot \cite{BAL, BALerr} and Sanna \cite{SAN} on the $p$--adic valuations of Lucas sequences.

\begin{Theorem}[\cite{SAN}, Corollary 1.6]\label{Thm: Carlo}
Let $p\geq 3$ be a prime number such that $p\nmid t$ and $k=\rho(p, U)$. Then,
\begin{align*}
v_p(U_n)=\begin{cases} v_p(n) + v_p(U_p) - 1 \ \  &\text{if} \ \  p\mid \Delta, \ p\mid  n \\
0 \ \  &\text{if} \ \  p\mid \Delta, \ p\nmid n \\
v_p(n) + v_p(U_k) \ \  &\text{if} \ \  p\nmid\Delta, \ k\mid  n \\
0 \ \  &\text{if} \ \  p\nmid\Delta, \ k\nmid n,
\end{cases}
\end{align*}
for each positive integer $n$, where $\nu_p(U_p)=1$ for $p\geq 5$ if $p\mid\Delta$.
\end{Theorem}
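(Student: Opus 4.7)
The plan is to split into the four cases listed in the statement, dispose of the two ``vanishing'' subcases using the lemmas already proved, and then handle the remaining two by a lifting-the-exponent style induction on $v_p(n)$.

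First I would deal with the cases where $v_p(U_n)=0$. Since $p\nmid t$, Lemma~\ref{Lem: PropRank} gives $p\mid U_n$ if and only if $\rho(p,U)\mid n$, so for $p\nmid\Delta$ and $k\nmid n$ we immediately get $v_p(U_n)=0$. When $p\mid\Delta$, the computation at the start of the proof of Lemma~\ref{lemma:leg} already yields $\rho(p,U)=p$, so the subcase $p\mid\Delta$, $p\nmid n$ gives $v_p(U_n)=0$ for the same reason.

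Next I would treat the unramified case $p\nmid\Delta$ with $k\mid n$ via the factorisation
\[\frac{U_{pn}}{U_n}=\sum_{j=0}^{p-1}\alpha^{n(p-1-j)}\beta^{nj}.\]
Work inside $\mathbb Z_p[\alpha,\beta]$, an unramified extension of $\mathbb Z_p$ since $p\nmid\Delta$, and set $A=\alpha^n$, $B=\beta^n$, $\delta=A-B$; note that $v_{\mathfrak p}(\delta)=v_p(U_n)\geq 1$ because $v_{\mathfrak p}(\alpha-\beta)=0$. Expanding $(B+\delta)^p$ with the binomial theorem yields
\[\frac{A^p-B^p}{A-B}=pB^{p-1}+\binom{p}{2}B^{p-2}\delta+\cdots+\delta^{p-1},\]
and for $p\geq 3$ the leading term has $\mathfrak p$-valuation exactly $1$ while every other summand has valuation at least $2$. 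Hence $v_p(U_{pn}/U_n)=1$. Iterating on $v_p(m)$, together with the companion computation that the sum $\sum_{j=0}^{m-1}A^{m-1-j}B^j$ reduces modulo $\mathfrak p$ to $mB^{m-1}$, which is a unit whenever $p\nmid m$, yields the desired formula $v_p(U_n)=v_p(U_k)+v_p(n)$ (using that $p\nmid k$ by Lemma~\ref{lemma:leg}).

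The remaining case $p\mid\Delta$, $p\mid n$ runs by exactly the same ratio identity, but in the ramified extension $\mathbb Z_p[\sqrt\Delta]$ where $v_{\mathfrak p}(p)=2$ and $v_{\mathfrak p}(\alpha-\beta)>0$; the inductive step $v_p(U_{pn}/U_n)=1$ goes through unchanged, and the whole theorem reduces to the base value $v_p(U_p)$. I expect this base step to be the main obstacle: expanding
\[U_p=p\beta^{p-1}+\binom{p}{2}\beta^{p-2}\gamma+\cdots+\gamma^{p-1}\]
with $\gamma=\alpha-\beta$, one checks that for $p\geq 5$ the leading term $p\beta^{p-1}$ dominates all others in $\mathfrak p$-valuation and forces $v_p(U_p)=1$; but for $p=3$ the tail $\gamma^{p-1}=\gamma^2$ can acquire the same $\mathfrak p$-valuation as $p\beta^{p-1}$, so cancellation is possible and $v_p(U_p)$ has to be left as an unspecified parameter in the statement, which is precisely the reason for the extra hypothesis $p\geq 5$ in the final clause of the theorem.
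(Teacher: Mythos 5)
This statement is not proved in the paper at all: it is imported verbatim from Sanna (\cite{SAN}, Corollary 1.6) and used as an external ingredient, so there is no in-paper argument to compare against. Your proposal is a genuine, essentially correct self-contained proof in the classical lifting-the-exponent style: the two vanishing cases do follow from Lemma \ref{Lem: PropRank} (together with $\rho(p,U)=p$ when $p\mid\Delta$), the identity $\frac{A^p-B^p}{A-B}=pB^{p-1}+\binom{p}{2}B^{p-2}\delta+\cdots+\delta^{p-1}$ with $B$ a unit (as $p\nmid t$) gives $v_p(U_{pn}/U_n)=1$, the congruence $\sum_{j=0}^{m-1}A^{m-1-j}B^j\equiv mB^{m-1}\pmod{\mathfrak p}$ handles multiplication of the index by $m$ coprime to $p$ (using $p\nmid k$ from Lemma \ref{lemma:leg}), and the base expansion of $U_p$ in powers of $\gamma=\alpha-\beta$ correctly explains both why $v_p(U_p)=1$ for $p\geq 5$ when $p\mid\Delta$ and why $p=3$ must be left as a parameter. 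This buys the paper something it currently outsources: a short, uniform proof of the quoted valuation formula.

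Two small imprecisions are worth fixing. First, when $p\mid\Delta$ the extension $\mathbb Q_p(\alpha)$ need not be ramified (e.g.\ $\Delta=p^2u$ with $u$ a nonresidue unit gives an unramified extension, and $\Delta$ may even be a square in $\mathbb Q_p$), so the claim $v_{\mathfrak p}(p)=2$ is not always true; what your argument actually needs, and what $p\mid\Delta$ does guarantee, is only $v(\alpha-\beta)=\tfrac12 v(\Delta)>0$ for the valuation $v$ extending $v_p$. Second, the assertion that the inductive step ``goes through unchanged'' in the case $p\mid\Delta$ silently uses that every index occurring after the base has $p\mid n$, hence $p\mid U_n$, so that $v(\delta)=v(\alpha-\beta)+v_p(U_n)\geq \tfrac12+1$; this is exactly what rules out, for $p=3$, a repetition of the cancellation phenomenon beyond the single base term $U_3$, and it should be said explicitly. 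With these two clarifications the argument is complete.
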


\begin{Theorem}[\cite{SAN}, Theorem 1.5 for $p=2$]\label{Thm: Carlo2}
If $2 \nmid t$ and $2\mid s$ (i.e., $2\mid \Delta$ and $\rho(2,U) = 2$), then
\[v_2(U_n) = 
\begin{cases}
v_2(n) + v_2(U_2) - 1 \ \  &\text{if} \ \  2\mid n \cr
0 \ \  &\text{if} \ \  2\nmid n.
\end{cases}
\]
If $2 \nmid t$ and $2\nmid s$ (i.e., $2\nmid\Delta$ and $\rho(2,U) = 3$), then
\[v_2(U_n) = 
\begin{cases}
v_2(n) + v_2(U_6) - 1 \ \  &\text{if} \ \  3\mid n, \ 2\mid n \cr
v_2(U_3) \ \  &\text{if} \ \ 3\mid n, \ 2 \nmid n \cr
0 \ \  &\text{if} \ \  3\nmid n.
\end{cases}
\]
\end{Theorem}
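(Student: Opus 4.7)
The proof proceeds by iteratively applying the identity $U_{2n} = U_nV_n$, where $V_n := \alpha^n+\beta^n$ is the companion Lucas sequence, together with the doubling formula $V_{2n} = V_n^2 - 2(-t)^n$, to peel off factors of $2$ from $n$. Writing $n = 2^am$ with $m$ odd and iterating,
\[
v_2(U_n) = v_2(U_m) + \sum_{i=0}^{a-1} v_2(V_{2^im}),
\]
so the problem reduces to computing $v_2(U_m)$ for $m$ odd and $v_2(V_{2^im})$ for $i\geq 0$. A simple induction on the recurrence mod $2$ pins down the parities: when $2\mid s$ and $2\nmid t$, $U_n$ is odd iff $n$ is odd and $V_n$ is even for all $n\geq 1$ (so $\rho(2,U)=2$); when $2\nmid s$ and $2\nmid t$, both $U_n$ and $V_n$ mod $2$ follow the period-$3$ Fibonacci pattern $0,1,1,0,1,1,\ldots$ (so $\rho(2,U)=3$), and are even iff $3\mid n$.

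In the first case, for $m$ odd the recurrence reduced mod $s$ gives $V_n\equiv tV_{n-2}\pmod s$; combined with $V_1=s\equiv 0$, this yields $s\mid V_m$ and $V_m/s \equiv t^{(m-1)/2}\pmod 2$, hence $v_2(V_m) = v_2(s) = v_2(U_2)$. For $i\geq 1$, the doubling formula forces $v_2(V_{2k})=1$ as soon as $v_2(V_k)\geq 1$, because $v_2(V_k^2)\geq 2$ while $v_2(2(-t)^k)=1$; hence $v_2(V_{2^im})=1$ for every $i\geq 1$. Combining with $v_2(U_m)=0$ yields $v_2(U_n) = v_2(U_2)+(a-1) = v_2(n)+v_2(U_2)-1$ when $n$ is even, and $v_2(U_n)=0$ when $n$ is odd.

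In the second case, if $3\nmid m$ then $U_m$ and every $V_{2^im}$ are odd, so $v_2(U_n)=0$. Otherwise one invokes the tripling identities
\[
U_{3k} = U_k\bigl(V_k^2 - (-t)^k\bigr),\qquad V_{3k} = V_k\bigl(V_k^2 - 3(-t)^k\bigr),
\]
derived from $\alpha^{3k}\mp\beta^{3k} = (\alpha^k\mp\beta^k)(\alpha^{2k}\pm\alpha^k\beta^k+\beta^{2k})$. Writing $m = 3^jm'$ with $\gcd(m',6)=1$, the base $k=m'$ has $U_{m'}, V_{m'}$ odd, so all of the $2$-adic weight sits in the correction factors. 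A lifting induction, exploiting that $V_k^2 \equiv s^2 \pmod 8$ (odd squares) and $t^k\equiv t \pmod 8$ for odd $k$, shows that after every tripling $v_2(U_{3k}) = v_2(U_3) = v_2(s^2+t)$ and $v_2(V_{3k}) = v_2(V_3) = v_2(s^2+3t)$, so $v_2(U_m)=v_2(U_3)$ and $v_2(V_m)=v_2(V_3)$. For $i\geq 1$, $V_m$ is even and the same doubling argument as before gives $v_2(V_{2^im})=1$; assembling everything yields the three stated formulas, using $v_2(U_6)=v_2(U_3)+v_2(V_3)$. The main obstacle is precisely the lifting step in the tripling iteration, where one must track congruences modulo successively higher powers of $2$ uniformly in $k$ in order to handle the exceptional cases where $v_2(s^2+t)$ or $v_2(s^2+3t)$ is large.
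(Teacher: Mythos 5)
This statement is not proved in the paper at all: it is quoted verbatim from Sanna's work (\cite{SAN}, Theorem 1.5), so there is no in-paper argument to compare with, and your attempt must stand on its own. Its skeleton is sound: $U_{2k}=U_kV_k$, $V_{2k}=V_k^2-2(-t)^k$, the parity analysis of $U_n,V_n$ modulo $2$, and the resulting reduction to $v_2(U_m)$ and $v_2(V_m)$ for odd $m$ are all correct, and the case $2\mid s$, $2\nmid t$ does go through. (One small repair there: ``reducing the recurrence mod $s$'' only gives information modulo $s$, not the $2$-adic statement about $V_m/s$; instead write $V_m=s\bigl(V_{m-1}+t^{(m-1)/2}+\sum_{i=1}^{(m-3)/2}t^iV_{m-1-2i}\bigr)$ and use that all the even-indexed $V$'s are even to see the cofactor is odd, whence $v_2(V_m)=v_2(s)=v_2(U_2)$.)

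The genuine gap is the first tripling step in the case $2\nmid st$. After one tripling from $k$ coprime to $6$ you need $v_2\bigl(V_k^2+t^k\bigr)=v_2(s^2+t)$ and $v_2\bigl(V_k^2+3t^k\bigr)=v_2(s^2+3t)$, and your mod $8$ congruences ($V_k^2\equiv 1$, $t^k\equiv t$) only yield this when the target valuation is at most $2$; when $t\equiv 7\pmod 8$ (for the $U$-factor) or $t\equiv 5\pmod 8$ (for the $V$-factor) both quantities are $\equiv 0\pmod 8$ and nothing follows. You flag exactly this as ``the main obstacle'' but do not resolve it, and the proposed remedy --- tracking congruences modulo higher powers of $2$ ``uniformly in $k$'' --- cannot work as described: for $j\geq 3$ neither $V_k\bmod 2^j$ nor $t^k\bmod 2^j$ is constant in odd $k$ (already $7^k\bmod 32$ alternates between $7$ and $23$); only their combination is stable, and that stability is precisely the assertion to be proved, so no fixed-modulus computation can deliver it. A clean way to close the gap with the tools you already have: the quotient $W_k:=U_{3k}/U_3=(\alpha^{3k}-\beta^{3k})/(\alpha^3-\beta^3)$ is itself a Lucas sequence with parameters $s'=V_3=s(s^2+3t)$, which is even, and $t'=t^3$, which is odd, and its companion sequence is $V_{3k}$; applying your already-proved first case to this sequence gives, for odd $k$, that $W_k$ is odd and $v_2(V_{3k})=v_2(V_3)$, i.e. $v_2(U_{3k})=v_2(U_3)$ and $v_2(V_{3k})=v_2(V_3)$, after which your assembly of the three formulas (using $U_6=U_3V_3$) is correct. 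Equivalently one can invoke a lifting-the-exponent argument for the unit $\alpha/\beta$ in the unramified quadratic extension of $\mathbb{Q}_2$, but some such structural input is indispensable; as written, the crucial case of large $v_2(s^2+t)$ or $v_2(s^2+3t)$ is exactly what is left unproved.
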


\begin{Theorem}[\cite{BAL}, Theorem 1.2]\label{Thm: Ballot}
Let $p$ be a prime such that $s = p^a s'$ and $t = p^b t'$, where $p \nmid s' t'$ and $a,b\in\mathbb N_+\cup\{\infty\}$ with $a,b$ necessarily finite in the case of $b=2a$. Then for all $n \geq 1$, we have
\[ v_p(U_n) = \begin{cases}  (n-1)a \ \ &\text{if } \ \ b > 2a \cr (n-1)a + v_p(U'_n) \ \ &\text{if } \ \ b = 2a, \end{cases} \]
where $(U'_n)_{n \geq 0}$ is the Lucas sequence with characteristic polynomial $X^2 - s' X - t'$. For $b < 2a$, we have that
\[ \begin{cases}v_p(U_{2n}) =  bn + (a-b) + v_p(n) + \lambda_n \cr v_p(U_{2n+1}) = bn,    \end{cases} \]
where
\[ \lambda_n = \begin{cases} v_p(s'^2 -t') \ \ &\text{if } 2 \leq p \leq 3, \ 2a = b + 1, \ p\mid n \cr 0 \ \ &\text{otherwise}.  \end{cases} \]
\end{Theorem}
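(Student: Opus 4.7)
The plan is to derive the statement via Binet's formula $U_n = (\alpha^n - \beta^n)/(\alpha - \beta)$ coupled with a Newton--polygon analysis of the characteristic polynomial $X^2 - sX - t$ over $\mathbb{Q}_p$. Writing $s = p^a s'$ and $t = p^b t'$ with $p \nmid s't'$, the Newton polygon has vertices at $(0,b)$, $(1,a)$, $(2,0)$, and its shape is dictated by the sign of $b - 2a$; this is precisely how the statement trichotomizes. In the case $b > 2a$ the polygon has two distinct slopes, so after labelling we have $v_p(\alpha) = a$ and $v_p(\beta) = b - a > a$ in an appropriate extension of $\mathbb{Q}_p$; the ultrametric inequality then gives $v_p(\alpha^n - \beta^n) = na$ and $v_p(\alpha - \beta) = a$, hence $v_p(U_n) = (n-1)a$. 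In the case $b = 2a$, the substitution $\alpha = p^a \tilde\alpha$, $\beta = p^a \tilde\beta$ shows that $\tilde\alpha, \tilde\beta$ are the roots of $X^2 - s' X - t'$, and Binet's formula immediately yields $U_n = p^{a(n-1)} U'_n$.

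The main obstacle is the case $b < 2a$, where the Newton polygon is a single segment of slope $-b/2$ and both roots have $p$-adic valuation $b/2$ in a possibly ramified quadratic extension. I would write $\alpha = p^{b/2} u$, $\beta = p^{b/2} v$, so that $uv = -t'$ and $\epsilon := u + v$ satisfies $v_p(\epsilon) = a - b/2 > 0$, and then expand $v^n = (-u + \epsilon)^n$ by the binomial theorem. For odd $n$ the constant term $2 u^n$ dominates $u^n - v^n$ and a direct computation gives $v_p(U_{2n+1}) = bn$; for even $n$ the constant terms cancel, the linear term $n u^{n-1} \epsilon$ has valuation $v_p(n) + a - b/2$, and after tracking $v_p(u - v)$ one recovers $v_p(U_{2n}) = bn + (a - b) + v_p(n)$.

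The hardest ingredient is the anomalous term $\lambda_n$, which activates only for $p \in \{2, 3\}$ when $2a = b + 1$ and $p \mid n$. In this borderline regime the condition $2a = b + 1$ forces $v_p(\epsilon) = 1/2$, the extra factor of $p$ dividing $n$ raises the valuation of the linear term, and the quadratic term $\binom{n}{2} u^{n-2} \epsilon^2$ then matches or beats it in valuation, so a second-order cancellation governed by $s'^2 - t' \bmod p$ determines the true answer. I would pin this down by pushing the expansion of the roots one further $p$-adic order using the defining relations $u + v = \epsilon$ and $uv = -t'$, and isolating the correction coming from the small-prime behaviour of $\binom{n}{2}$ together with the identity $\Delta = p^{2a} s'^2 + 4 p^b t'$ that connects $s'^2 - t'$ to the discriminant in the critical regime.
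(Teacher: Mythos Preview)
The paper does not prove this theorem. It is quoted verbatim as Ballot's result (Theorem~1.2 of \cite{BAL}, together with the erratum \cite{BALerr}) and used as a black box in the subsequent computation of $v_p(P_n)$. There is therefore no ``paper's own proof'' to compare your attempt against.

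That said, your Newton--polygon/Binet sketch is a reasonable route to an independent proof, and the trichotomy on the sign of $b-2a$ is exactly how the argument naturally splits. The cases $b>2a$ and $b=2a$ are essentially done as you wrote them. For $b<2a$ your reduction to $u,v$ with $uv=-t'$ and $v_p(u+v)=a-b/2$ is the right move, and you correctly localise the anomalous $\lambda_n$ term to the regime $p\in\{2,3\}$, $2a=b+1$, $p\mid n$, where the quadratic term $\binom{n}{2}\epsilon^2$ competes with the linear one. Two points deserve more care than your sketch gives them: for $p=2$ the baseline computations of $v_2(u-v)$ and of the ``dominant'' term $2u^n$ are themselves shifted by $v_2(2)$ and $v_2(4)$, so the odd-index and even-index formulas require a separate pass; and the promised extraction of $v_p(s'^2 + t')$ from the second-order expansion is only announced, not carried out (note also that $\Delta = s^2+4t$, so the link to $s'^2 + t'$ goes through $ps'^2+4t'$ rather than the expression you wrote). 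None of this is fatal to the approach, but the $p=2$ bookkeeping and the derivation of $\lambda_n$ are where the actual content of Ballot's theorem lies, and in your proposal they remain at the level of intention.
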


In the following theorems we characterize the $p$-adic valuations of Lucas atoms, dealing with the cases $p \nmid t$ and $p$ divides both $s$ and $t$ (note that when $p\nmid s$ and $p\mid t$, the $p$-adic valuation of Lucas polynomials is always zero).

\begin{Theorem} \label{thm:patom}
Let $p\geq 3$ be a prime number such that and $k=\rho(p, U)$. Let us suppose that $p\nmid t$. Then
\begin{align*}
v_p(P_n)=\begin{cases} v_p(U_k) \ \  &\text{if} \ \  n=k \\
1 \ \  &\text{if} \ \  n=kp^h, \ h\geq 1 \\ 
0 \ \  &\text{otherwise.}
\end{cases}
\end{align*}
\end{Theorem}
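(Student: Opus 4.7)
The plan is to pass from the multiplicative factorization $U_n=\prod_{d\mid n}P_d$ (of Equation \eqref{eq:fatt}) to an additive identity on $p$-adic valuations and invert it. Taking $v_p$ gives $v_p(U_n)=\sum_{d\mid n}v_p(P_d)$, and Möbius inversion yields
\[v_p(P_n)=\sum_{d\mid n}\mu(n/d)\,v_p(U_d).\]
The strategy is then to feed Sanna's formula (Theorem \ref{Thm: Carlo}) into this sum and evaluate the resulting arithmetic convolution.

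First I would repackage Theorem \ref{Thm: Carlo} in a unified form: for $p\geq 3$ with $p\nmid t$,
\[v_p(U_d)=\begin{cases}v_p(d/k)+v_p(U_k)&\text{if }k\mid d,\\ 0&\text{if }k\nmid d.\end{cases}\]
When $p\nmid\Delta$ this is immediate since $p\nmid k$ (Lemma \ref{lemma:leg}), so $v_p(d)=v_p(d/k)$. When $p\mid\Delta$, Lemma \ref{lemma:leg} forces $k=p$, and $v_p(d)+v_p(U_p)-1=v_p(d/p)+v_p(U_k)=v_p(d/k)+v_p(U_k)$. Once this is in hand, the case $k\nmid n$ is trivial, because then $k\nmid d$ for every $d\mid n$ and the Möbius sum vanishes; this already disposes of many of the "otherwise" cases.

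The main computation is the case $k\mid n$. Writing $n=k\ell$ and reparameterising the relevant divisors as $d=ke$ with $e\mid\ell$, the sum splits as
\[v_p(P_n)=v_p(U_k)\sum_{e\mid\ell}\mu(\ell/e)+\sum_{e\mid\ell}\mu(\ell/e)\,v_p(e).\]
The first piece equals $v_p(U_k)\cdot[\ell=1]$ by the standard Möbius identity. For the second piece I would use the Dirichlet identity $v_p(m)=\sum_{d\mid m}\mathbf{1}_{\{d=p^j,\,j\geq 1\}}$, which exhibits $v_p$ as a Dirichlet convolution; inverting gives $\sum_{e\mid\ell}\mu(\ell/e)\,v_p(e)=1$ if $\ell$ is a positive power of $p$ and $0$ otherwise.

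Assembling the three cases: $\ell=1$ gives $n=k$ and $v_p(P_n)=v_p(U_k)$; $\ell=p^h$ with $h\geq 1$ gives $n=kp^h$ and $v_p(P_n)=1$; every other $\ell$ (including any $\ell$ with a prime factor different from $p$, or $k\nmid n$) gives $v_p(P_n)=0$. This matches the three cases of the theorem exactly. The only real subtlety, and where I would be most careful, is the uniform rewriting of Sanna's theorem across the $p\mid\Delta$ and $p\nmid\Delta$ branches, because Theorem \ref{Thm: Carlo} states them separately and with the extra caveat $v_p(U_p)=1$ for $p\geq 5$; once the unified form is verified, the Möbius-theoretic evaluation is short.
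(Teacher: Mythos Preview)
Your proof is correct, but it follows a different route from the paper's primary argument for Theorem~\ref{thm:patom}. The paper proceeds by induction on $n$: assuming the claimed values of $v_p(P_i)$ for all $i<n$, it writes $v_p(P_n)=v_p(U_n)-\sum_{d\mid n,\,d\neq n}v_p(P_d)$, invokes Theorem~\ref{Thm: Carlo}, and works through several sub-cases ($k\nmid n$; $k\mid n$ with $v_p(n)=0$; $n=kmp^h$ with $m>1$; $n=kp^h$), treating $p\mid\Delta$ and $p\nmid\Delta$ separately throughout. Your M\"obius-inversion approach bypasses both the induction and the $\Delta$ case split by first unifying Sanna's two branches into the single formula $v_p(U_d)=v_p(d/k)+v_p(U_k)$ for $k\mid d$, and then evaluating the convolution in one stroke. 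This is in fact precisely the alternative method the paper sketches in the subsection ``Another approach for the study of the $p$-adic valuations of Lucas atoms'': your identity $\sum_{e\mid\ell}\mu(\ell/e)\,v_p(e)=[\ell\text{ is a positive power of }p]$ is exactly Lemma~\ref{Lem: conv}(3) with $q=1$, and your unified rewriting of Sanna's theorem is Theorem~\ref{Thm: Carlo ref}. The inductive proof is more elementary and self-contained; your route is shorter and makes the structural reason for the answer transparent.
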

\begin{proof}
First, consider $p \nmid \Delta$ and we prove by induction that
\begin{equation} \label{eq:vP} \begin{cases} v_p(P_n) \geq 1, \quad \text{if } n = k p^h, \ h \geq 0  \cr v_p(P_n) = 0, \quad \text{otherwise}.\end{cases} \end{equation}
By Lemmas \ref{Lem: PropRank} and \ref{Lem: RankP} we know that $v_p(P_k) = v_p(U_k) \geq 1$ and $v_p(P_n) = 0$ for all $n < k$. Now, fixed a certain positive integer $n$, we suppose that \eqref{eq:vP} holds for all $i < n$ and we prove that it holds also for $n$. By \eqref{eq:fatt}, we have
\[ v_p(P_n) = v_p(U_n) - \sum\limits_{\substack{d\mid n\\ d\not=n}} v_p(P_d). \]
If $k \nmid n$, then $v_p(P_n) = 0$, since $v_p(U_n) = 0$ by Lemma \ref{Lem: PropRank}.\\ 
If $k \mid  n$ and $v_p(n) = 0$, then
\[ v_p(P_n) = v_p(U_k) + v_p(n) - \sum\limits_{\substack{d\mid n\\ d\not=n}} v_p(P_d), \]
by Theorem \ref{Thm: Carlo}. Thus,
\[ v_p(P_n) = - \sum\limits_{\substack{d\mid n\\ d\not\in\{k,n\}}} v_p(P_d) = 0, \]
since $v_p(P_k) = v_p(U_k)$ by Lemma \ref{Lem: RankP} and all divisors $d$ of $n$ can not be of the form $kp^h$ for some $h \geq 0$. \\
If $k \mid  n$ and $n = kmp^h$, for some positive integer $h$ and $m \not=1$ with $\gcd(m, p) = 1$, then
\[ v_p(P_n) = v_p(n) - \sum\limits_{\substack{d\mid n\\ d\not\in\{k,n\}}} v_p(P_d) = h - \sum_{i = 1}^h v_p(P_{kp^i}), \]
since, by Lemma \ref{lemma:leg}, $p \nmid k$ Moreover, by inductive hypothesis, $v_p(P_{kp^i}) \geq 1$, for all $1 \leq i \leq h$, and $v_p(P_d) = 0$ otherwise. Since the $p$-adic valuations of Lucas atoms can not be negative, we must have that $v_p(P_{kp^i}) = 1$, for all $1 \leq i \leq h$, and thus $v_p(P_n) = 0$. \\
Finally, if $k \mid  n$ and $n = kp^h$ for a positive integer $h$, then
\[v_p(P_n) = v_p(n) - \sum\limits_{\substack{d\mid n\\ d\not\in\{k,n\}}} v_p(P_d) = h - \sum_{i=1}^{h-1} v_p(P_{kp^i}) = 1.\]
Now we consider the case $p \mid  \Delta$ and we prove by induction that \eqref{eq:vP} still holds. Considering that in this case $k = p$ by Lemma \ref{lemma:leg} and $v_p(U_n) = 0$ for all $n < p$ by Theorem \ref{Thm: Carlo}, then from Lemmas \ref{Lem: PropRank} and \ref{Lem: RankP}, the base step is true. Now, consider \eqref{eq:vP} true for all $i < n$, for a fixed positive integer $n$, and we prove that it holds also for $n$. \\
If $p \nmid n$, then $v_p(P_n) = v_p(U_n) - \sum\limits_{\substack{d\mid n\\ d\not=n}} v_p(P_d) = 0$. \\
If $p \mid  n$ and $n = mp^h$, for an integer $m > 1$ such that $\gcd(m, p) = 1$, then, using the inductive hypothesis and Theorem \ref{Thm: Carlo}, we obtain
\begin{align*}
v_p(P_n) &= v_p(U_n) - \sum\limits_{\substack{d\mid n\\ d\not=n}} v_p(P_d) = v_p(n) +v_p(U_p) - 1 -v_p(P_p) - \sum\limits_{\substack{d\mid n\\ d\not\in\{p,n\}}} v_p(P_d) \\
&= h - 1 - \sum_{i=2}^h v_p(P_{p^i}).
\end{align*}
Since $v_p(P_{p^i}) \geq 1$, and the $p$-adic valuation of Lucas atom is nonnegative, we must have $v_p(P_{p^i}) = 1$, for all $i \geq 2$ and thus $v_p(P_n) = 0$.\\
If $p\mid n$ and $n = p^h$, then 
\begin{align*}v_p(P_n) &= v_p(U_n) - \sum\limits_{\substack{d\mid n\\ d\not=n}} v_p(P_d) = v_p(n) + v_p(U_p) - 1 - v_p(P_p) - \sum\limits_{\substack{d\mid n\\ d\not\in\{p,n\}}} v_p(P_d) \\
&= h - 1 - \sum_{i=2}^{h-1} v_p(P^i) = 1. 
\end{align*}
\end{proof}

\begin{Theorem}\label{thm:patom2}
If $2 \nmid t$ and $2\mid s$ (i.e., $2\mid \Delta$ and $\rho(2,U) = 2$), then
\begin{align*}
v_2(P_n) =
\begin{cases}
v_2(U_2) \ \ &  \text{if } n = 2 \cr
1 & \text{if } n = 2^h, \ h \geq 2 \cr
0, & \text{otherwise}.
\end{cases}   
\end{align*}
If $2 \nmid t$ and $2\nmid s$ (i.e., $2\nmid\Delta$ and $\rho(2,U) = 3$), then
\begin{align*}
v_2(P_n) = 
\begin{cases}
v_2(U_3)  \ \ & \text{if } n = 3 \cr
v_2(U_6) - v_2(U_3) & \text{if } n = 6 \cr
1 & \text{if } n = 3 \cdot 2^h, \ h \geq 2 \cr
0 & \text{otherwise}.
\end{cases}
\end{align*}
\end{Theorem}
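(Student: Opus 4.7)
The plan is to mirror the inductive strategy used in the proof of Theorem \ref{thm:patom}, exploiting the factorization identity
$$v_2(P_n) \;=\; v_2(U_n) \;-\; \sum_{\substack{d\mid n\\ d\neq n}} v_2(P_d),$$
which is a direct consequence of \eqref{eq:fatt}. The values of $v_2(U_n)$ will be substituted from Theorem \ref{Thm: Carlo2}, while Lemma \ref{Lem: RankP} provides the base identity $v_2(P_k) = v_2(U_k)$ at $k = \rho(2,U)$.

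In the first sub-case ($2\mid s$, so $k=2$), I would run a strong induction on $n$, splitting according to whether $n$ is odd (then $v_2(U_n)=0$ forces $v_2(P_n)=0$ immediately), $n = 2^h m$ with $m>1$ odd, or $n=2^h$ with $h\geq 2$. In the latter two cases, by the inductive hypothesis the only nonzero contributions to the divisor sum come from $d = 2, 2^2,\dots, 2^h$ (respectively $d=2, 2^2,\dots, 2^{h-1}$), and a direct computation using $v_2(U_n) = v_2(n) + v_2(U_2) - 1$ shows that these contributions telescope to yield $v_2(P_n) = 0$ or $1$, exactly as claimed.

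The second sub-case ($2\nmid s$, so $k=3$) requires handling $n=6$ as an auxiliary base case: from $U_6 = P_1 P_2 P_3 P_6$ and the observation that $U_2 = s$ is odd, hence $v_2(P_2)=0$, we immediately read off $v_2(P_6) = v_2(U_6) - v_2(U_3)$. The inductive step then splits into four patterns according to the residue of $n$ modulo $6$: $3\nmid n$ (trivial via $v_2(U_n)=0$), $n = 3m$ with $m>1$ odd and $\gcd(m,3)=1$, $n = 3\cdot 2^h$ with $h\geq 2$, and $n = 3m\cdot 2^h$ with $m>1$, $\gcd(m,6)=1$, $h\geq 1$. In each pattern, the inductive hypothesis pins down the set of proper divisors with nonzero $2$-adic valuation to $\{3\}$, $\{3,6\}$, or $\{3, 6, 3\cdot 2^2,\dots,3\cdot 2^{h-1}\}$ as appropriate, and the sum telescopes against $v_2(U_n) = v_2(n) + v_2(U_6) - 1$ to give the predicted value.

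The main obstacle is bookkeeping rather than insight: one must carefully catalogue, at each stage of the induction, exactly which divisors of $n$ carry nonzero $2$-adic valuation (by the hypothesis, these lie in the \emph{tower} above $k = \rho(2,U)$), and then verify that the resulting partial sum cancels $v_2(U_n)$ up to the stated correction of $0$ or $1$. Once this catalogue is fixed, the arithmetic is entirely parallel to the proof of Theorem \ref{thm:patom} and reduces to straightforward substitutions.
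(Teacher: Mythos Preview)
Your proposal is correct and follows essentially the same approach as the paper: the paper's own proof simply states that the first sub-case ``follows as in the proof of Theorem \ref{thm:patom}'' and, for the second sub-case, records the auxiliary base values $v_2(P_3)=v_2(U_3)$ and $v_2(P_6)=v_2(U_6)-v_2(U_3)$ (using $v_2(P_2)=0$) before again deferring the induction to Theorem \ref{thm:patom}. One minor remark: your four ``patterns'' in the second sub-case do not quite exhaust all $n$ (for instance $n=9$ or $n=9\cdot 2^h$ are excluded by the coprimality conditions you impose on $m$), but these omitted cases are handled by the identical telescoping argument, so this is only a bookkeeping slip rather than a gap in the strategy.
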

\begin{proof}
The proof for the case $2 \nmid t$ and $2\mid s$ follows as in the proof of Theorem \ref{thm:patom}.

If $2 \nmid st$, we have $v_2(P_3) = v_2(U_3) \not=0$ and $v_2(P_6) = v_2(U_6) - v_2(U_3)$, since $v_2(P_2) = 0$. Then, the other cases follow by induction as in the proof of Theorem \ref{thm:patom}. 
\end{proof}

\begin{Theorem}
Let $p$ be a prime such that $s = p^a s'$ and $t = p^b t'$, where $p \nmid s' t'$, $a, b >0$ with $b\geq 2a$ possibly infinite and $a$ finite. Then for all $n \geq 2$, we have
\begin{align*}
 v_p(P_n) = \begin{cases}  \varphi(n)a \ \ & \text{if } b > 2a \cr
 \varphi(n)a + v_p(P'_n) & \text{if } b = 2a,
 \end{cases}
 \end{align*}

where $(P'_n)_{n \geq 1}$ is the sequence of Lucas atoms associated to the Lucas sequence $(U'_n)_{n \geq 0}$ with characteristic polynomial $X^2 - s' X - t'$. 
\end{Theorem}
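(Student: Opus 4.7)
The plan is to obtain the $p$-adic valuation of $P_n$ from the $p$-adic valuation of $U_n$ (given by Theorem \ref{Thm: Ballot}) by Möbius inversion applied to the factorization \eqref{eq:fatt}.

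First I would take the $p$-adic valuation of \eqref{eq:fatt} to write
\[v_p(U_n) = \sum_{d \mid n} v_p(P_d),\]
and then invert this relation to obtain
\[v_p(P_n) = \sum_{d \mid n} \mu(n/d)\, v_p(U_d),\]
valid for all $n \geq 1$. At this point the whole problem reduces to plugging in the formulas from Theorem \ref{Thm: Ballot}. Note that both formulas there are valid for every $d \geq 1$ (for $d=1$ they correctly give $v_p(U_1)=0$), so no separate treatment of small $d$ is needed.

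In the case $b > 2a$, substitution gives
\[v_p(P_n) = a \sum_{d \mid n} \mu(n/d)(d-1) = a \left( \sum_{d \mid n} \mu(n/d)\, d - \sum_{d \mid n} \mu(n/d) \right).\]
The first sum equals $\varphi(n)$ (Möbius inversion of $n = \sum_{d\mid n}\varphi(d)$) and the second equals $[n=1]$, which is zero for $n \geq 2$. Hence $v_p(P_n) = a\varphi(n)$.

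In the case $b = 2a$, substitution yields
\[v_p(P_n) = \sum_{d \mid n} \mu(n/d)\bigl((d-1)a + v_p(U'_d)\bigr) = a\varphi(n) + \sum_{d\mid n}\mu(n/d)\, v_p(U'_d),\]
and the last sum is precisely $v_p(P'_n)$ by the same Möbius inversion applied to the factorization $U'_n = \prod_{d \mid n} P'_d$ of the auxiliary Lucas sequence. This completes the proof. No real obstacle is expected; the argument is essentially a mechanical application of Möbius inversion, and the only thing to be careful about is that Theorem \ref{Thm: Ballot} is invoked uniformly for all divisors $d$ of $n$, so that linearity of the inversion can be used to split the two contributions cleanly.
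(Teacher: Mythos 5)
Your proposal is correct, but it proves the theorem by a different route than the paper does here. The paper's own proof is a strong induction on $n$: it writes $v_p(P_n) = v_p(U_n) - \sum_{d\mid n,\, d\neq n} v_p(P_d)$ from the factorization $U_n=\prod_{d\mid n}P_d$, plugs in Ballot's formula for $v_p(U_n)$ and the inductive hypothesis $v_p(P_d)=a\varphi(d)$ (resp.\ $a\varphi(d)+v_p(P'_d)$) for proper divisors, and concludes via the identity $\sum_{d\mid n}\varphi(d)=n$, handling the $b=2a$ case by peeling off the $v_p(U'_n)-\sum_{d\mid n,\,d\neq n}v_p(P'_d)=v_p(P'_n)$ term. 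You instead invert the divisor sum once and for all, $v_p(P_n)=\sum_{d\mid n}\mu(n/d)v_p(U_d)$, and evaluate the resulting sums directly using $\sum_{d\mid n}\mu(n/d)\,d=\varphi(n)$ and $\sum_{d\mid n}\mu(n/d)=[n=1]$; the $b=2a$ case then splits off $\sum_{d\mid n}\mu(n/d)v_p(U'_d)=v_p(P'_n)$ by linearity. This is exactly the ``another approach'' via M\"obius transformation and Dirichlet convolution that the paper sketches in the subsection following its $p$-adic valuation theorems (your identity for $\sum_{d\mid n}\mu(n/d)\,d$ is part (1) of the paper's Lemma on convolutions), so your argument is a legitimate, and arguably cleaner, non-inductive realization of a route the authors themselves indicate. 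What the induction buys in the paper is uniformity with the proofs of the neighbouring theorems (where the case analysis of $v_p(U_n)$ is less regular and a one-line inversion is messier); what your version buys is brevity and a transparent reduction to two standard M\"obius identities. One small point worth making explicit in either approach: passing valuations through the factorization (whether by subtraction or by inversion) tacitly uses that the relevant $U_d$ are non-zero so that all valuations are finite, which is the same implicit nondegeneracy assumption the paper makes.
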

\begin{proof}
We prove the statement by induction. If $b > 2a$, the base step is straightforward. Now suppose $v_p(P_i) = \varphi(i) a$, for all $i < n$ and then
\[v_p(P_n) = v_p(U_n) - \sum\limits_{\substack{d\mid n\\ d\not=n}} v_p(P_d) = (n-1)a - \sum\limits_{\substack{d\mid n\\ d\not\in\{1,n\}}} \varphi(d)a = a \varphi(n),\]
where we exploited Theorem \ref{Thm: Ballot} and $v_p(P_1) = 0$.\\
Also for $b = 2a$ the base step is straightforward and supposing that the thesis is true for all the nonnegative integers less than $n$, we have
\[ v_p(P_n) = (n-1) a +v_p(U'_n) - \sum\limits_{\substack{d\mid n\\ d\not=n}}( a \varphi(d) + v_p(P'_d)) = a \varphi(n) + v_p(P'_n) \]
since $v_p(P'_n)  = v_p(U'_n) - \sum\limits_{\substack{d\mid n\\ d\not=n}} v_p(P'_d)$.
\end{proof}

\begin{Theorem}\label{thm:patom4}
Let $p$ be a prime such that $s = p^a s'$ and $t = p^b t'$, where $a, b>0$ and $p \nmid s' t'$ and $b < 2a$ with $a$ possibly infinite. If $p \not\in \{2, 3\}$ or $b < 2a + 1$, then for each $n\geq 2$, we have
\begin{align*}
v_p(P_n) = \begin{cases} a \ \ & \text{if } n = 2 \cr 
b \cfrac{\varphi(n)}{2} + 1 \ & \text{if } n = 2p^h, \ h \geq 1 \cr
b \cfrac{\varphi(n)}{2} & \text{otherwise}.
\end{cases}
\end{align*}
\end{Theorem}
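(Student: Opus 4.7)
My plan is to apply Möbius inversion to the factorization $U_n = \prod_{d\mid n} P_d$ from \eqref{eq:fatt}, obtaining
\[
v_p(P_n) \;=\; \sum_{d \mid n} \mu(d)\, v_p\bigl(U_{n/d}\bigr),
\]
and then substitute Theorem \ref{Thm: Ballot}. The hypothesis ``$p \notin \{2,3\}$ or $b < 2a+1$'' is exactly what guarantees that the correction term $\lambda_n$ in Theorem \ref{Thm: Ballot} vanishes, so throughout I may use the simplified formulas $v_p(U_k) = \tfrac{b(k-1)}{2}$ for odd $k$ and $v_p(U_{2m}) = bm + (a - b) + v_p(m)$.

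For odd $n > 1$, every divisor of $n$ is odd, so
\[
v_p(P_n) = \frac{b}{2}\sum_{d \mid n}\mu(d)\!\left(\frac{n}{d} - 1\right) = \frac{b\varphi(n)}{2},
\]
by the classical identities $\sum_{d\mid n}\mu(d)(n/d)=\varphi(n)$ and $\sum_{d \mid n}\mu(d) = 0$ for $n > 1$. This disposes of the odd case $n\geq 3$.

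For even $n = 2^k m$ with $m$ odd, only divisors of $n$ of the form $d'$ or $2d'$ (with $d'\mid m$) survive in the Möbius sum, since $\mu$ is supported on squarefree integers. In the subcase $k = 1$, a short computation yields
\[
v_p(U_{2m/d'}) - v_p(U_{m/d'}) = \tfrac{b\,m/d'}{2} + \tfrac{2a-b}{2} + v_p(m/d'),
\]
and summing against $\mu(d')$ together with the auxiliary identity
\[
\sum_{d' \mid m}\mu(d')\, v_p(m/d') \;=\; \begin{cases} 1 & \text{if } m = p^h \text{ for some } h \geq 1, \\ 0 & \text{otherwise,} \end{cases}
\]
(which I would prove by writing $m = p^h m'$ with $\gcd(m',p)=1$ and noting that only $j\in\{0,1\}$ contribute in $\mu(p^j)$) produces $v_p(P_2)=a$, $v_p(P_{2p^h})=b\varphi(n)/2+1$ (which forces $p$ odd), and $b\varphi(n)/2$ in all remaining subcases, where one also uses $\varphi(2m)=\varphi(m)$. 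In the subcase $k \geq 2$, both $2^k m/d'$ and $2^{k-1}m/d'$ are even, and a direct subtraction gives
\[
v_p(U_{2^k m/d'}) - v_p(U_{2^{k-1}m/d'}) \;=\; b\cdot 2^{k-2}\tfrac{m}{d'} + \epsilon,
\]
where $\epsilon=1$ if $p=2$ and $\epsilon=0$ otherwise. The Möbius sum then collapses to $b\cdot 2^{k-2}\varphi(m)$, with an extra $+1$ exactly when $p=2$ and $m=1$, i.e., when $n = 2^k = 2p^{k-1}$; using $\varphi(n)=2^{k-1}\varphi(m)$ this matches the stated formula.

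The main obstacle is bookkeeping: carefully checking that the stated hypothesis forces $\lambda_n=0$ in Theorem \ref{Thm: Ballot} so that the simplified Ballot formulas apply, and then aligning the indicator terms arising from the Möbius sums with the piecewise description of $v_p(P_n)$. Beyond this, the whole argument reduces to elementary Dirichlet convolutions with $\mu$, $\mathrm{id}$, and $v_p$, so no genuinely new idea is required beyond organised case analysis.
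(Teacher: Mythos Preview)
Your argument is correct. It differs from the paper's primary proof of Theorem~\ref{thm:patom4}, which proceeds by strong induction on $n$ via the relation
\[
v_p(P_n)=v_p(U_n)-\sum_{\substack{d\mid n\\ d\neq n}} v_p(P_d),
\]
establishing first the value for $n=2$, then for $n=2p^h$, and treating the remaining $n$ by the inductive hypothesis. Your route, by contrast, inverts once and for all with M\"obius and then evaluates the resulting Dirichlet convolutions of $\mu$ with $\mathrm{id}$, with constants, and with $v_p$. This is precisely the alternative method the paper sketches in its subsection ``Another approach for the study of the $p$--adic valuations of Lucas atoms'': your auxiliary identity $\sum_{d'\mid m}\mu(d')\,v_p(m/d')=[m=p^h,\ h\geq 1]$ is the special case $q=1$ of the paper's Lemma~\ref{Lem: conv}(3), and your treatment of the odd case is Lemma~\ref{Lem: conv}(1) combined with $\sum_{d\mid n}\mu(d)=0$. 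What you gain is a uniform, non-inductive computation that cleanly isolates where the exceptional ``$+1$'' comes from (namely the convolution $\mu * v_p$); what the paper's inductive proof gains is that it avoids setting up those Dirichlet identities separately. Both are straightforward once Theorem~\ref{Thm: Ballot} is in hand, and you have correctly identified that the role of the hypothesis on $p$ and $b$ is to force $\lambda_n=0$ there (the printed condition ``$b<2a+1$'' is evidently a typo for ``$b<2a-1$'', i.e.\ $2a\neq b+1$).
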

\begin{proof}
We proceed by induction. The basis is straightforward. Now, consider $n = 2p^h$ for some $h \geq 1$ and that the thesis is true for all $i < n$. Then,
\[v_p(P_n) = v_p(U_n) - \sum\limits_{\substack{d\mid n\\ d\not=n}} v_p(P_d).\]
By Theorem \ref{Thm: Ballot}, we have
\[v_p(P_n) = bp^h  + (a-b) + h - \sum\limits_{\substack{d\mid n\\ d\not=n}} v_p(P_d)\]
and using the inductive hypothesis we get
\begin{align*} v_p(P_n) &= bp^h  + (a-b) + h - v_p(P_2) - \cfrac{b}{2} \sum_{i=1}^h \varphi(p^j) - \sum_{i=1}^{h-1}\left( b\cfrac{\varphi(2p^i)}{2} + 1 \right)  \\
&= bp^h -b + 1 - \cfrac{b}{2}\left( \sum_{i=1}^h \varphi(p^i) + \sum_{i=1}^{h-1} \varphi(2p^i) \right) \\
&= bp^h - b + 1 - \cfrac{b}{2} ( 2p^h-1 - \varphi(2p^h) - \varphi(2) ) = b \cfrac{\varphi(n)}{2} + 1.
\end{align*}
The other cases, when $n \not= 2p^h$, follow in a similar way
\end{proof}

Finally, the next two theorems fully complete our analysis. The techniques of the proofs are similar to the previous ones and they exploit the results of Ballot.

\begin{Theorem}
If $s = 3^as'$ and $t= 3^bt'$, with $a, b \in\mathbb N_+$, $3 \nmid s't'$ and $b= 2a - 1$, then for each $n\geq 2$, we have 
\begin{align*}
v_3(P_n) = \begin{cases} a \ \ & \text{if } n = 2 \cr 
b + 1 +v_3(s'^2+t') \ \ & \text{if } n = 6 \cr
3^{h-1}b 
+ 1 & \text{if } n = 2\cdot 3^h, \ h \geq 2 \cr
b \cfrac{\varphi(n)}{2} & \text{otherwise}.  \end{cases}
\end{align*}
\end{Theorem}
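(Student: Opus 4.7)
The plan is to argue by induction on $n$, using the factorization identity \eqref{eq:fatt} in the form
\[ v_3(P_n) = v_3(U_n) - \sum_{\substack{d \mid n \\ d < n}} v_3(P_d), \]
with $v_3(U_n)$ evaluated by Theorem \ref{Thm: Ballot}. Since $b = 2a - 1$, Ballot's condition $2a = b + 1$ is met, so at $p = 3$ the correction term $\lambda_m$ contributes $v_3(s'^2 - t')$ (equivalently $v_3(s'^2 + t')$ under the sign convention for the characteristic polynomial of the present paper) whenever $3 \mid m$, and vanishes otherwise. Concretely, for odd $n = 2m+1$ one has $v_3(U_n) = bm$, and for even $n = 2m$ one has $v_3(U_n) = bm + (a - b) + v_3(m) + \lambda_m$.

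The base cases are direct. From $P_2 = s$ we obtain $v_3(P_2) = a$. Ballot gives $v_3(U_3) = b$, hence $v_3(P_3) = b = b\varphi(3)/2$ as predicted by the ``otherwise'' formula. For $n = 6$, Ballot yields $v_3(U_6) = a + 2b + 1 + v_3(s'^2 + t')$, and subtracting $v_3(P_1) + v_3(P_2) + v_3(P_3) = a + b$ produces the claimed $v_3(P_6) = b + 1 + v_3(s'^2 + t')$.

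For the inductive step I would distinguish whether $n$ is of ``otherwise'' type or of the form $2 \cdot 3^h$ with $h \geq 2$. In the otherwise case, every proper divisor $d$ of $n$ either belongs to the special set $\{2, 6\} \cup \{2 \cdot 3^i : i \geq 2\}$ and contributes the known special value by induction, or falls into the otherwise class and contributes $b \varphi(d)/2$. Using the identities $\sum_{d \mid n} \varphi(d) = n$ and $\varphi(2m) = \varphi(m)$ for $m$ odd, the generic contributions collapse to $b \varphi(n)/2$; the $v_3(s'^2 + t')$ sitting inside $v_3(P_6)$ cancels the $\lambda_m$ in $v_3(U_n)$ precisely when $6 \mid n$; and the additional $+1$ carried by each special $v_3(P_{2 \cdot 3^i})$ cancels the $v_3(m) + (a-b)$ coming from Ballot's formula.

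The main obstacle is the case $n = 2 \cdot 3^h$ with $h \geq 2$, in which the full chain $\{2, 6, 2 \cdot 3^2, \ldots, 2 \cdot 3^{h-1}\}$ of special proper divisors coexists with the otherwise divisors $\{3, 3^2, \ldots, 3^h\}$. Three cancellations must be verified simultaneously: the single $v_3(s'^2 + t')$ contributed by $v_3(P_6)$ kills the $\lambda_{3^h}$ appearing in $v_3(U_n)$; the $h - 1$ unit contributions from $v_3(P_6)$ and $v_3(P_{2 \cdot 3^i})$ for $2 \leq i \leq h - 1$ combine with $v_3(3^h) = h$ from Ballot and with $(a-b)$ to leave a net $+1$; and the geometric sums $b \sum_{i=1}^h 3^{i-1} = b(3^h - 1)/2$ (from the $v_3(P_{3^i})$) together with $b \sum_{i=2}^{h-1} 3^{i-1} = b(3^{h-1} - 3)/2$ (from the special $v_3(P_{2 \cdot 3^i})$) combine with $b \cdot 3^h$ in $v_3(U_n)$ to yield exactly $3^{h-1} b$. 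Collecting these pieces produces $v_3(P_{2 \cdot 3^h}) = 3^{h-1} b + 1$, closing the induction.
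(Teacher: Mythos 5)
Your proposal is correct and follows essentially the same route the paper intends: the paper gives no detailed proof of this theorem, saying only that it is obtained like the preceding ones by induction via $v_3(P_n)=v_3(U_n)-\sum_{d\mid n,\ d\neq n}v_3(P_d)$ combined with Ballot's formula (Theorem \ref{Thm: Ballot}), which is exactly your argument, and your base cases ($n=2,3,6$) and the delicate $n=2\cdot 3^h$ bookkeeping check out. Your parenthetical on the sign discrepancy ($v_3(s'^2-t')$ in the quoted Ballot statement versus $v_3(s'^2+t')$ in this theorem, due to the $X^2-sX-t$ convention) is also the correct reading of the conventions.
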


\begin{Theorem}
If $s = 2^as'$ and $t= 2^bt'$, with $a, b \in\mathbb N_+$, $2 \nmid s't'$ and $b= 2a - 1$, then for each $n\geq 2$, we have
\begin{align*}
v_2(P_n) = \begin{cases} a \ \ & \text{if } n = 2 \cr 
b + 1 +v_2(s'^2+t') \ \ & \text{if } n = 4 \cr 
2^{h-2}b+1 & \text{if } n = 2^h, \ h \geq 3  \cr b \cfrac{\varphi(n)}{2} & \text{otherwise}. \end{cases}
\end{align*}
\end{Theorem}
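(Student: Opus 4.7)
The plan is strong induction on $n$, applied to the atomic factorization $v_2(P_n) = v_2(U_n) - \sum_{d \mid n,\,d < n} v_2(P_d)$ together with Ballot's formula (Theorem \ref{Thm: Ballot}). The hypothesis $b = 2a-1$ with $p = 2$ places us precisely in the regime $2a = b + 1$, so Ballot's correction term contributes $\lambda_k = v_2(s'^2 + t')$ whenever $2 \mid k$; this is the mechanism that produces the extra summand at $n = 4$ and its propagation to larger powers of two.

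The base cases are direct. Since $P_2 = U_2 = s$, we have $v_2(P_2) = a$. Applying Ballot with $k = 2$ gives $v_2(U_4) = 2b + (a - b) + 1 + v_2(s'^2 + t') = a + b + 1 + v_2(s'^2 + t')$, so that $v_2(P_4) = v_2(U_4) - v_2(P_1) - v_2(P_2) = b + 1 + v_2(s'^2 + t')$. For the inductive step I would split into three cases. \emph{(i) Odd $n \geq 3$:} Ballot yields $v_2(U_n) = b(n-1)/2$, and by induction every proper divisor $d > 1$ contributes $b\varphi(d)/2$; the identity $\sum_{d \mid n} \varphi(d) = n$ collapses the telescoping sum to $v_2(P_n) = b\varphi(n)/2$. \emph{(ii) $n = 2^h$, $h \geq 3$:} Ballot gives $v_2(U_{2^h}) = b \cdot 2^{h-1} + (a - b) + (h - 1) + v_2(s'^2 + t')$; subtracting the precomputed $v_2(P_2)$ and $v_2(P_4)$ and the inductive values $v_2(P_{2^i}) = b \cdot 2^{i-2} + 1$ for $3 \leq i \leq h - 1$, and applying the geometric identity $\sum_{i=3}^{h-1} 2^{i-2} = 2^{h-2} - 2$, yields $v_2(P_{2^h}) = b \cdot 2^{h-2} + 1$. \emph{(iii) $n = 2^h m$ with $h \geq 1$ and $m > 1$ odd:} partition the divisors of $n$ into odd divisors, pure powers of two, and mixed divisors; Ballot on $v_2(U_n)$, the inductive hypothesis on each class, and $\sum_{d \mid n}\varphi(d) = n$ combine to yield $b\varphi(n)/2$, with the $v_2(s'^2 + t')$ contributions on both sides cancelling (the sub-case $h = 1$ requires separate treatment, since then $2^{h-1}m$ is odd and Ballot contributes no $\lambda$-correction).

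The main obstacle will be the bookkeeping in case (iii), where one must isolate the anomalous contributions at $d = 2$, $d = 4$, and $d = 2^i$ for $3 \leq i \leq h$, verify that the $v_2(s'^2 + t')$ terms on both sides cancel against Ballot's $\lambda$-term in $v_2(U_n)$, and confirm that the residual sum simplifies to $b\varphi(n)/2$ via the Euler totient identity. Once this partition of the divisor lattice is executed carefully, the argument runs in direct analogy with the proof of Theorem \ref{thm:patom4}.
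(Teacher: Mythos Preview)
Your proposal is correct and follows precisely the approach the paper itself indicates: the paper does not give a detailed proof of this theorem but simply states that ``the techniques of the proofs are similar to the previous ones and they exploit the results of Ballot,'' i.e., strong induction on $n$ via the identity $v_2(P_n)=v_2(U_n)-\sum_{d\mid n,\,d<n}v_2(P_d)$ combined with Theorem~\ref{Thm: Ballot}, exactly as in Theorem~\ref{thm:patom4}. Your base cases, the three-way case split, and in particular your observation that the sub-case $h=1$ in~(iii) needs separate handling because $2^{h-1}m$ is odd (so Ballot's $\lambda$-term vanishes) are all on target; the bookkeeping you outline does collapse via $\sum_{d\mid n}\varphi(d)=n$ as claimed.
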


\bigskip

\subsection{Another approach for the study of the $p$--adic valuations of Lucas atoms}

From \eqref{eq:fatt} we derive
\begin{equation*}
    \nu_p(U_n)=\sum_{d\mid n} \nu_p(P_d)
\end{equation*}
for each prime number $p$ and positive integer $n$. Hence, by M\"{o}bius transformation formula we obtain
\begin{equation*}
    \nu_p(P_n)=\sum_{d\mid n} \mu(d)\nu_p(P_{\frac{n}{d}}), \quad n\geq 1,
\end{equation*}
where $\mu$ denotes the M\"{o}bius function. From Theorems \ref{Thm: Carlo}, \ref{Thm: Carlo2}, \ref{Thm: Ballot} we know that the sequence $(\nu_p(U_n))_{n\geq 1}$ is a linear combination of identity function, characteristic functions of some arithmetic progressions and products of functions of this form with the sequence of $p$-adic valuations of consecutive positive integers. By the bilinearity of Dirichlet convolution we can express $p$-adic valuations of Lucas atoms as linear combinations of the transformations of the mentioned functions via Dirichlet convolution with M\"{o}bius function.

For each integer $r\geq 1$ we denote by $\mathbf 1_r$ the characteristic function of the multiplicities of $r$, i.e. the function given by the formula
$$\mathbf 1_r(n)=
\begin{cases}
    1 \ \ &\text{if} \ \ r\mid n \cr
    0 \ \ &\text{if} \ \ r\mid n.
\end{cases}$$

Now we are ready to state the crucial lemma.

\begin{Lemma}\label{Lem: conv}
    Let $p$ be a prime number and $q,r\geq 1$ be integers, where $\gcd(q,p)=1$. Then, for each integer $n\geq 1$ we have:
    \begin{enumerate}
        \item $\sum_{d\mid n}\mu(d)\cdot\frac{n}{d}=\varphi(n)$;
        \item $\sum_{d\mid n}\mu(d)\cdot\mathbf 1_r\left(\frac{n}{d}\right)=
        \begin{cases}
            1 \ \ &\text{if} \ \ n=r \cr
            0 \ \ &\text{if} \ \ n\neq r;
        \end{cases}$
        \item $\sum_{d\mid n}\mu(d)\cdot\mathbf 1_{q}\left(\frac{n}{d}\right)\nu_p\left(\frac{n}{d}\right)=
        \begin{cases}
            1 \ \ &\text{if} \ \ n=p^hq \ \ \text{for some} \ \ h\in\mathbb N_+ \cr
            0 \ \ &\text{otherwise}.
        \end{cases}$
    \end{enumerate}
\end{Lemma}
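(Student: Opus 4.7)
The plan is to handle the three identities in order: parts (1) and (2) are classical M\"{o}bius-function manipulations, and part (3) will follow by expanding $\nu_p$ as a sum of indicator functions and reducing to part (2).

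For part (1), I would simply invoke the well-known identity $n=\sum_{d\mid n}\varphi(d)$, i.e.\ $\operatorname{id} = \mathbf 1 \ast \varphi$ in Dirichlet convolution notation, and apply M\"{o}bius inversion to obtain $\varphi = \mu\ast\operatorname{id}$. That is exactly the claim.

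For part (2), the key observation is the equivalence
\[ \mathbf 1_r(n/d) = 1 \iff r\mid n/d \iff dr\mid n. \]
If $r\nmid n$ every term vanishes and the sum is $0$. Otherwise write $n = rm$; then $dr\mid n$ is equivalent to $d\mid m$, so the sum collapses to $\sum_{d\mid m}\mu(d)$. By the standard M\"{o}bius identity this equals $1$ when $m=1$ (i.e.\ $n=r$) and $0$ otherwise, which matches the stated dichotomy.

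For part (3), the main idea is to decompose the $p$-adic valuation into a sum of indicator functions,
\[ \nu_p(k) = \sum_{h\geq 1}\mathbf 1_{p^h}(k), \]
where the right-hand sum is finite for every fixed $k$. Substituting and swapping the two (finite) sums gives
\[ \sum_{d\mid n}\mu(d)\,\mathbf 1_q(n/d)\,\nu_p(n/d) \;=\; \sum_{h\geq 1}\sum_{d\mid n}\mu(d)\,\mathbf 1_q(n/d)\,\mathbf 1_{p^h}(n/d). \]
The hypothesis $\gcd(q,p)=1$ guarantees $\operatorname{lcm}(q,p^h)=qp^h$, hence $\mathbf 1_q\cdot\mathbf 1_{p^h}=\mathbf 1_{qp^h}$. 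Each inner sum is then precisely the one handled by part (2) with $r=qp^h$, so it contributes $1$ exactly when $n=qp^h$ and $0$ otherwise. Summing over $h\geq 1$ yields the claimed value.

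The only delicate point is the manipulation in part (3), specifically recognising that $\nu_p$ should be unravelled into the telescoping sum of $\mathbf 1_{p^h}$'s and that the coprimality of $q$ and $p$ lets $\mathbf 1_q\cdot\mathbf 1_{p^h}$ be absorbed into a single indicator $\mathbf 1_{qp^h}$; once this is seen, the whole lemma reduces to elementary M\"{o}bius bookkeeping.
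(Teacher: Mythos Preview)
Your proof is correct. Parts (1) and (2) match the paper's argument essentially verbatim: both invoke M\"{o}bius inversion on $\sum_{d\mid n}\varphi(d)=n$ for (1), and both reduce (2) to the classical fact $\sum_{d\mid m}\mu(d)=[m=1]$ after observing that the indicator restricts the sum to divisors of $n/r$.

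For part (3) your route is genuinely different from the paper's. The paper first performs the same change of variables as in (2) to reduce to $\sum_{d'\mid n'}\mu(d')\,\nu_p(n'/d')$ with $n'=n/q$, and then argues by cases: if $n'=p^h$ the sum telescopes to $\nu_p(p^h)-\nu_p(p^{h-1})=1$, while if $n'$ has a prime factor $u\neq p$ one pairs each divisor $d'$ with $ud'$ and the terms cancel. Your argument instead expands $\nu_p=\sum_{h\geq 1}\mathbf 1_{p^h}$, uses $\gcd(q,p)=1$ to combine $\mathbf 1_q\cdot\mathbf 1_{p^h}=\mathbf 1_{qp^h}$, and then invokes part (2) for each $h$. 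This is slicker: it avoids the case split and the ad hoc pairing, and it makes the lemma internally self-contained by reducing (3) directly to (2). The paper's computation, on the other hand, is slightly more hands-on and perhaps makes the ``telescoping'' nature of the prime-power case more visible. Both are short and elementary; yours has the edge in economy.
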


\begin{proof}
    The first identity follows from application of M\"{o}bius transformation formula to classical identity
    $$\sum_{d\mid n}\varphi (d)=n.$$

    We start the proof of remaining identities with the note that their left hand sides vanish when $n$ is not divisible by $r$ ($q$, respectively) as all the divisors of $n$ are not  divisible by $r$ ($q$, respectively). From this moment on, we consider the case of $n$ divisible by $r$ ($q$, respectively).

    Write $n=rn'$ for some $n'\in\mathbb N_+$. Then,
    $$
    \sum_{d\mid n}\mu(d)\cdot\mathbf 1_r\left(\frac{n}{d}\right)=\sum_{d\mid n}\mu\left(\frac{n}{d}\right)\cdot\mathbf 1_r(d)=\sum_{r\mid d\mid n}\mu\left(\frac{n}{d}\right)=\sum_{d'\mid n'}\mu\left(\frac{n'}{d'}\right)=
        \begin{cases}
            1 \ \ &\text{if} \ \ n'=1 \cr
            0 \ \ &\text{if} \ \ n'\neq 1,
        \end{cases}
    $$
    where we write $d=rd'$ for $d$ divisible by $r$ and the last equality is a well known fact. The second identity is proved.

    Similarly we start the proof of the last identity when $n=qn'$, $n'\in\mathbb N_+$. Write
    \begin{equation}\label{eq: conv}
    \begin{split}
      &\sum_{d\mid n}\mu(d)\cdot\mathbf 1_q\left(\frac{n}{d}\right)\nu_p\left(\frac{n}{d}\right)=\sum_{d\mid n}\mu\left(\frac{n}{d}\right)\cdot\mathbf 1_q(d)\nu_p(d)=\sum_{q\mid d\mid n}\mu\left(\frac{n}{d}\right)\nu_p(d)\\
      &=\sum_{d'\mid n'}\mu\left(\frac{n'}{d'}\right)\nu_p(d')=\sum_{d'\mid n'}\mu(d')\nu_p\left(\frac{n'}{d'}\right),  
    \end{split}
    \end{equation}
    where we write $d=qd'$ for $d$ divisible by $q$ and use the coprimality of $q$ and $p$ in the penultimate equality. If $n'=p^h$, $h\in\mathbb N_+$, then the last expression in \eqref{eq: conv} takes the form
    $$\nu_p(p^h)-\nu_p(p^{h-1})=1.$$
    Otherwise, $n'=p^huw$, where $h,w\in\mathbb N_+$, $u$ is a prime number, and $p\nmid uw$. Then we may write the last expression in \eqref{eq: conv} as follows.
    $$\sum_{d'\mid pw}\left(\mu(d')\nu_p\left(\frac{n'}{d'}\right)+\mu(ud')\nu_p\left(\frac{n'}{ud'}\right)\right)=\sum_{d'\mid pw}\left(\mu(d')\nu_p\left(\frac{n'}{d'}\right)-\mu(d')\nu_p\left(\frac{n'}{d'}\right)\right)=0$$
    The proof of the last identity is finished.
\end{proof}

Now, we can reformulate Theorems \ref{Thm: Carlo}, \ref{Thm: Carlo2}, \ref{Thm: Ballot}.

\begin{Theorem}[Reformulation of Theorem \ref{Thm: Carlo}]\label{Thm: Carlo ref}
Let $p\geq 3$ be a prime number such that $p\nmid t$ and $k=\rho(p, U)$. Then,
\begin{align*}
v_p(U_n)=\begin{cases} v_p(n) +  (v_p(U_p) - 1)\mathbf 1_p(n) \ \  &\text{if} \ \  p\mid \Delta\\
\mathbf 1_k(n)v_p(n) + v_p(U_k)\mathbf 1_k(n) \ \  &\text{if} \ \  p\nmid \Delta,
\end{cases}
\end{align*}
for each positive integer $n$, where $\nu_p(U_p)=1$ for $p\geq 5$ if $p\mid\Delta$.
\end{Theorem}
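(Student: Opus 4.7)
The plan is to recognize that this statement is purely a notational reformulation of Theorem \ref{Thm: Carlo}: the original piecewise definition (branching on whether $p$ or $k$ divides $n$) is being repackaged using the characteristic functions $\mathbf 1_p$ and $\mathbf 1_k$ introduced just above. The proof then amounts to verifying, in each of the two regimes $p\mid\Delta$ and $p\nmid\Delta$, that plugging the two possible values of the relevant indicator into the compact formula reproduces the two corresponding cases of Theorem \ref{Thm: Carlo}.

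First I would dispatch the case $p\mid\Delta$. When $p\mid n$, the indicator $\mathbf 1_p(n)$ equals $1$, so the right-hand side evaluates to $v_p(n)+v_p(U_p)-1$, matching the first branch of Theorem \ref{Thm: Carlo}. When $p\nmid n$, we have $\mathbf 1_p(n)=0$ and also $v_p(n)=0$, so both the reformulated and the original formula produce $0$. Note that the observation $v_p(n)=0$ in the latter subcase is what allows the compact formula to absorb the $p\nmid n$ branch without losing information.

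For the case $p\nmid\Delta$, I would first rewrite the right-hand side as $\mathbf 1_k(n)\bigl(v_p(n)+v_p(U_k)\bigr)$. If $k\mid n$ then $\mathbf 1_k(n)=1$ and this gives $v_p(n)+v_p(U_k)$, matching the third branch of Theorem \ref{Thm: Carlo}. If $k\nmid n$ the factor $\mathbf 1_k(n)$ vanishes and the expression is $0$, matching the fourth branch. Here, unlike in the previous case, the multiplier $\mathbf 1_k(n)$ has to be placed in front of $v_p(n)$ itself, because $v_p(n)$ need not be zero when $k\nmid n$.

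There is no real obstacle: the content is the already proved Theorem \ref{Thm: Carlo}, and the work is simply bookkeeping to cast the formula in a form to which Lemma \ref{Lem: conv} and Möbius inversion can be applied termwise in the subsequent analysis of $v_p(P_n)$.
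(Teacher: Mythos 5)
Your verification is correct, and it is exactly what the paper intends: Theorem \ref{Thm: Carlo ref} is stated without proof as a direct notational repackaging of Theorem \ref{Thm: Carlo}, and your case-by-case check (including the key observation that $v_p(n)=0$ when $p\nmid n$ lets the $p\mid\Delta$ branch omit the indicator on $v_p(n)$, while the $p\nmid\Delta$ branch needs $\mathbf 1_k(n)$ multiplying $v_p(n)$) is precisely the routine bookkeeping that justifies it.
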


\begin{Theorem}[Reformulation of Theorem \ref{Thm: Carlo2}]\label{Thm: Carlo2 ref}
If $2 \nmid t$ and $2\mid s$ (i.e., $2\mid \Delta$ and $\rho(2,U) = 2$), then
\[v_2(U_n) = v_2(n) +  (v_2(U_2) - 1)\mathbf 1_2(n).\]
If $2 \nmid t$ and $2\nmid s$ (i.e., $2\nmid\Delta$ and $\rho(2,U) = 3$), then
\[v_2(U_n) = v_2(n)\mathbf 1_3(n) + v_2(U_3)\mathbf 1_3(n) + (v_2(U_6) - v_2(U_3) - 1)\mathbf 1_6(n).\]
\end{Theorem}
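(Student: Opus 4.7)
The plan is to verify the reformulation by inspecting each case of the original Theorem \ref{Thm: Carlo2} and checking that the corresponding expression on the right-hand side of the reformulation reduces to the stated value. Since the two formulas are piecewise and the reformulation uses only the indicator functions $\mathbf 1_r$ and the $2$-adic valuation, the argument is an elementary case analysis.

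For the first part ($2\nmid t$, $2\mid s$, so $\rho(2,U)=2$), I would split according to parity of $n$. If $n$ is odd, then $v_2(n)=0$ and $\mathbf 1_2(n)=0$, so the reformulation yields $0$, in agreement with Theorem \ref{Thm: Carlo2}. If $n$ is even, then $\mathbf 1_2(n)=1$, giving $v_2(n)+v_2(U_2)-1$, again matching.

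For the second part ($2\nmid t$, $2\nmid s$, so $\rho(2,U)=3$), I would distinguish three configurations. First, if $3\nmid n$, all three indicators vanish and the expression is $0$. Second, if $3\mid n$ but $2\nmid n$, then $\mathbf 1_3(n)=1$, $\mathbf 1_6(n)=0$, and $v_2(n)=0$, producing exactly $v_2(U_3)$. Third, if $6\mid n$, both $\mathbf 1_3(n)=1$ and $\mathbf 1_6(n)=1$, and the three contributions combine as
\[
v_2(n)+v_2(U_3)+\bigl(v_2(U_6)-v_2(U_3)-1\bigr)=v_2(n)+v_2(U_6)-1,
\]
which is the third branch of Theorem \ref{Thm: Carlo2}. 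The implication $\mathbf 1_6(n)=1\Rightarrow\mathbf 1_3(n)=1$ is what ensures no further cases need be considered.

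There is no real obstacle here — it is a bookkeeping exercise. The point of bothering with the reformulation is structural: it rewrites $v_2(U_n)$ as a $\mathbb Z$-linear combination of the building blocks $v_p$, $\mathbf 1_r$, and $v_p\cdot\mathbf 1_r$, whose Dirichlet convolutions with the M\"obius function are computed in Lemma \ref{Lem: conv}. Hence, once the reformulation is verified by the above case check, $v_2(P_n)$ can be extracted by M\"obius inversion and bilinearity of Dirichlet convolution, recovering Theorem \ref{thm:patom2} without further induction.
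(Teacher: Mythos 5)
Your case check is correct and is exactly what the paper implicitly relies on: the reformulation is stated without proof as a direct rewriting of the piecewise formula of Theorem \ref{Thm: Carlo2}, and your verification (odd/even for the first part; $3\nmid n$, $3\mid n$ with $2\nmid n$, and $6\mid n$ for the second) covers all cases and reproduces each branch. Nothing is missing.
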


\begin{Theorem}[Reformulation of Theorem \ref{Thm: Ballot}]\label{Thm: Ballot ref}
Let $p$ be a prime such that $s = p^a s'$ and $t = p^b t'$, where $p \nmid s' t'$ and $a,b\in\mathbb N_+\cup\{\infty\}$ with $a,b$ necessarily finite in the case of $b=2a$. Then for all $n \geq 1$, we have
\[ v_p(U_n) = \begin{cases}  an - a \ \ &\text{if } \ \ b > 2a \cr an - a + v_p(U'_n) \ \ &\text{if } \ \ b = 2a, \end{cases} \]
where $(U'_n)_{n \geq 0}$ is the Lucas sequence with characteristic polynomial $X^2 - s' X - t'$. For $b < 2a$, we have that
\begin{align*}
    v_p(U_{n}) = & \frac{b}{2}\cdot n - \frac{b}{2} + \left(a-\frac{b}{2}\right)\mathbf 1_2(n) + v_p\left(\frac{n}{2}\right)\mathbf 1_2(n) + \lambda\mathbf 1_{2p}(n)\\
    = & \frac{b}{2}\cdot n - \frac{b}{2} + \left(a-\frac{b}{2}\right)\mathbf 1_2(n) + v_p(n)\mathbf 1_2(n) - \delta_{p,2}\mathbf 1_2(n) + \lambda\mathbf 1_{2p}(n),
\end{align*}
where
\[ \lambda = \begin{cases} v_p(s'^2 -t') \ \ &\text{if } 2 \leq p \leq 3, \ 2a = b + 1 \cr 0 \ \ &\text{otherwise}  \end{cases} \]
and
\[ \delta_{p,2} = \begin{cases} 1 \ \ &\text{if} \ \ p=2 \cr 0 \ \ &\text{if} \ \ p\neq 2. \end{cases} \]
\end{Theorem}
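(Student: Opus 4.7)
The plan is to derive Theorem \ref{Thm: Ballot ref} directly from Theorem \ref{Thm: Ballot} by rewriting the piecewise formula using the indicator functions $\mathbf 1_r$ together with $v_p$. The cases $b>2a$ and $b=2a$ need no real work: the identities $(n-1)a=an-a$ and $(n-1)a+v_p(U'_n)=an-a+v_p(U'_n)$ convert the original formulas into the stated shape, since both $\mathbf 1_2$ and $\mathbf 1_{2p}$ terms are absent from those lines.

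The main verification is the case $b<2a$, where I would split on the parity of $n$ to match the two subformulas of Theorem \ref{Thm: Ballot}. For odd $n=2m+1$ we have $\mathbf 1_2(n)=\mathbf 1_{2p}(n)=0$, so the right-hand side of Theorem \ref{Thm: Ballot ref} collapses to $\tfrac{b}{2}n-\tfrac{b}{2}=\tfrac{b}{2}\cdot 2m=bm$, which is precisely $v_p(U_{2m+1})$. For even $n=2m$ the right-hand side expands to $bm+(a-b)+v_p(n/2)+\lambda\mathbf 1_{2p}(n)$, and this must agree with $bm+(a-b)+v_p(m)+\lambda_m$ from Theorem \ref{Thm: Ballot}. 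The key point is the identity $\lambda\mathbf 1_{2p}(n)=\lambda_m$: for $p$ odd the condition $2p\mid 2m$ is equivalent to $p\mid m$, and for $p=2$ the condition $4\mid 2m$ is equivalent to $2\mid m$; so in either case $\mathbf 1_{2p}(2m)=1$ precisely when $p\mid m$. Moreover $\lambda$ takes the non-zero value $v_p(s'^2-t')$ under exactly the arithmetic hypotheses $2\leq p\leq 3$ and $2a=b+1$ that render $\lambda_m$ nonzero, so the indicator $\mathbf 1_{2p}(n)$ faithfully absorbs the divisibility clause ``$p\mid m$'' appearing inside the definition of $\lambda_m$.

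Finally, the second displayed line in Theorem \ref{Thm: Ballot ref} follows from the elementary observation that $v_p(n/2)=v_p(n)-\delta_{p,2}$ for even $n$; multiplying by $\mathbf 1_2(n)$ gives $v_p(n/2)\mathbf 1_2(n)=v_p(n)\mathbf 1_2(n)-\delta_{p,2}\mathbf 1_2(n)$, which is exactly the rewriting in the second form. There is no serious obstacle in this proof; the only step requiring attention is the uniform matching of the sporadic correction $\lambda_m$ with the single expression $\lambda\mathbf 1_{2p}(n)$, where one must verify the equivalence of divisibility conditions on $n$ and on $m=n/2$ simultaneously for $p=2$ and $p=3$.
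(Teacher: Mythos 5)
Your proposal is correct: the parity split, the identification $\mathbf 1_{2p}(2m)=1 \iff p\mid m$ (covering both $p=2$ and $p$ odd), and the identity $v_p(n/2)=v_p(n)-\delta_{p,2}$ for even $n$ are exactly what is needed, and this is the same direct translation the paper intends — indeed the paper states this reformulation without a separate proof, treating it as an immediate rewriting of Theorem \ref{Thm: Ballot} in terms of the functions $\mathbf 1_r$.
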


Applying Lemma \ref{Lem: conv} to the above theorems and using bilinearity of Dirichlet convolution it is possible to obtain the main results of this section, exploiting this different approach.

\section{Non-holonomicity of the sequence of Lucas atoms} \label{sec:nonrec}

In the striking opposition to the Lucas sequence, which is binary linearly recurrent from its definition, the sequence of Lucas atoms evaluated at any pair of integer values of variables $s$ and $t\neq 0$ is not even holonomic, i.e. polynomially recurrent. This follows from the more general fact stated below.

\begin{Theorem}\label{Thm: nonhol}
    For each integers $s,t$ with $t\neq 0$ there do not exist $l\in\mathbb N_+$ and\linebreak $G_0(X),G_1(X),\ldots ,G_l(X)\in\mathbb Q[X]$ such that
    $$P_n(s,t)=G_0(n)+\sum_{j=1}^l G_j(n)P_{n-j}(s,t)$$
    for sufficiently large $n\in\mathbb N_+$.
\end{Theorem}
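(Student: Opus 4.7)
My plan is to argue by contradiction: assuming a polynomial recurrence of the stated form exists, I will exhibit a single prime $p$ for which the reduction $(P_n(s,t)\bmod p)_n$ must simultaneously be eventually periodic (from the recurrence) and \emph{not} eventually periodic (from the explicit description of its zero set in Section \ref{sec:padic}).

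First, I choose $D\in\mathbb N_+$ with $DG_0,\ldots ,DG_l\in\mathbb Z[X]$ and, since $t\neq 0$, pick a prime $p$ with $p\nmid D\cdot t$, noting that only finitely many primes are excluded. Multiplying the recurrence by $D$ yields an identity in $\mathbb Z$ which reduces faithfully modulo $p$. Because $(n+p)^k\equiv n^k\pmod p$, the residues $G_j(n)\bmod p$ depend only on $n\bmod p$, so the ``state'' $(P_{n-1},\ldots ,P_{n-l},n\bmod p)\in\mathbb F_p^{l+1}$ takes only finitely many values and evolves deterministically. It is therefore eventually periodic, and so is $(P_n\bmod p)_n$. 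Consequently the set $S:=\{n:p\mid P_n(s,t)\}$ is, from some point on, a finite union of arithmetic progressions, so if infinite it has bounded gaps between consecutive elements.

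Next, I invoke Theorem \ref{thm:patom} (when $p\geq 3$) or Theorem \ref{thm:patom2} (when $p=2$): since $p\nmid t$, these describe $S$ explicitly as $\{k,kp,kp^2,\ldots \}$ for a positive integer $k$ -- generically $k=\rho(p,U)$, and $k\in\{2,3\}$ in the low-rank sub-cases of Theorem \ref{thm:patom2}. Thus $S$ is infinite and the gap between $kp^h$ and $kp^{h+1}$ equals $kp^h(p-1)$, which is unbounded in $h$. This contradicts the previous paragraph, so no such recurrence can exist.

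The only real subtlety is ensuring the reduction modulo $p$ is well defined, which is handled by excluding the finitely many primes dividing $Dt$; after that, the finite-state argument giving eventual periodicity is standard, and the sparsity of $S$ is read off directly from the $p$-adic valuation results of Section \ref{sec:padic}. The essential tension driving the contradiction is that a holonomic integer sequence always has highly structured (eventually periodic) reductions mod $p$, whereas Lucas atoms have their mod-$p$ zeros concentrated on a geometric progression of indices -- a pattern that is manifestly non-periodic.
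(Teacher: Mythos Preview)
Your proof is correct and follows essentially the same approach as the paper's: choose a prime $p\nmid t$ not dividing the denominators of the $G_j$, use a finite-state/pigeonhole argument to force eventual periodicity of $(P_n\bmod p)_n$, and then derive a contradiction from the fact (Theorems \ref{thm:patom}--\ref{thm:patom2}) that $\{n:p\mid P_n(s,t)\}=\{kp^h:h\geq 0\}$ is too sparse to be eventually periodic. The only cosmetic differences are that the paper tracks the state along the subsequence $n\equiv 0\pmod p$ rather than carrying $n\bmod p$ as part of the state, and phrases the contradiction via ``finite union of arithmetic progressions'' rather than ``bounded gaps''.
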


\begin{proof}
    Fix non-zero values of $s$ and $t$ and assume that
    $$P_n(s,t)=G_0(n)+\sum_{j=1}^l G_j(n)P_{n-j}(s,t)$$
    for some $l,n_0\in\mathbb N_+$ and $G_1(X),\ldots ,G_l(X)\in\mathbb Q[X]$ and all $n\geq n_0$. Choose a prime number $p$ so large that $p$ divides neither of $s$, $t$ and the denominators of the coefficients of $G_0(X),G_1(X),\ldots ,G_l(X)$ written in the irreducible form. Then $G_j(n_1)\equiv G_j(n_2)\pmod{p}$ for each $j\in\{0,1,\ldots ,l\}$ and integers $n_1\equiv n_2\pmod{p}$. Since the set of $l$-tuples 
    $$\{(P_{mp-1}(s,t),\ldots ,P_{mp-l}(s,t))\pmod{p}: mp\geq n_0\}$$
    is finite, by pigeon hole principle we find integers $m_2>m_1\geq\frac{n_0}{p}$ such that 
    $$(P_{m_1p-1}(s,t),\ldots ,P_{m_1p-l}(s,t))\equiv (P_{m_2p-1}(s,t),\ldots ,P_{m_2p-l}(s,t))\pmod{p}.$$
    Putting $m_0=m_2-m_1$ we show by easy induction that $P_{n+m_0p}(s,t)\equiv P_n(s,t)\pmod{p}$ for each integer $n\geq m_1p-l$. In particular, the set of indices $n$ such that $p\mid P_n(s,t)$ is a finite union of infinite arithmetic progressions and a finite set. Howerver, Theorems \ref{thm:patom}, \ref{thm:patom2} and \ref{thm:patom4} show that is not the case. The contradiction proves non-holonomicity of the sequence $(P_n(s,t))_{n\geq 1}$.
\end{proof}

As a direct consequence we get the following.

\begin{Corollary}
    There do not exist $l\in\mathbb N_+$ and $G_0(s,t,X),G_1(s,t,X),\ldots ,G_l(s,t,X)\in\mathbb Q[s,t,X]$ such that
    $$P_n(s,t)=G_0(s,t,n)+\sum_{j=1}^l G_j(s,t,n)P_{n-j}(s,t)$$
    for sufficiently large $n\in\mathbb N_+$.
\end{Corollary}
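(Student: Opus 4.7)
The plan is to deduce this corollary immediately from Theorem \ref{Thm: nonhol} by specializing the variables $s$ and $t$ to integer values. Suppose for contradiction that there exist $l\in\mathbb N_+$ and polynomials $G_0(s,t,X),G_1(s,t,X),\ldots,G_l(s,t,X)\in\mathbb Q[s,t,X]$ such that
\[ P_n(s,t)=G_0(s,t,n)+\sum_{j=1}^l G_j(s,t,n) P_{n-j}(s,t) \]
holds for all $n\geq n_0$ for some threshold $n_0$. Since $P_n(s,t)\in\mathbb N[s,t]$ and both sides are polynomials in $s,t$ for each fixed $n$, the above is an identity in $\mathbb Q[s,t]$ for every sufficiently large $n$.

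Next, I would fix any integers $s_0,t_0$ with $t_0\neq 0$ and substitute. Setting $\tilde G_j(X):=G_j(s_0,t_0,X)\in\mathbb Q[X]$ for $j=0,1,\ldots,l$, the specialized identity reads
\[ P_n(s_0,t_0)=\tilde G_0(n)+\sum_{j=1}^l \tilde G_j(n) P_{n-j}(s_0,t_0) \]
for all $n\geq n_0$. But this is exactly the form of holonomic recurrence whose existence is ruled out by Theorem \ref{Thm: nonhol} applied to the pair $(s_0,t_0)$. The contradiction proves the corollary.

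There is essentially no obstacle here: the entire content of the statement is already contained in Theorem \ref{Thm: nonhol}, and the argument reduces to the trivial observation that a polynomial identity in $\mathbb Q[s,t]$ remains valid after substituting any integer values, together with the fact that $G_j(s_0,t_0,X)\in\mathbb Q[X]$ as soon as $s_0,t_0\in\mathbb Z\subset\mathbb Q$. The only mild subtlety worth pointing out is that one needs at least one admissible specialization $(s_0,t_0)$ with $t_0\neq 0$, which is evidently available; nothing further is required.
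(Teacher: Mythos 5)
Your argument is correct and is precisely what the paper intends: the paper states this Corollary as a direct consequence of Theorem \ref{Thm: nonhol}, and your specialization of $(s,t)$ to a fixed integer pair $(s_0,t_0)$ with $t_0\neq 0$, turning each $G_j(s_0,t_0,X)$ into a polynomial in $\mathbb Q[X]$, is exactly that deduction. No gap; nothing further is needed.
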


Since a lot of number sequences having combinatorial interpretation are polynomially recurrent, Theorem \ref{Thm: nonhol} suggests us that the problem of finding some natural combinatorial interpretation of Lucas atoms seems to be intractable.

Let us notice that the proof of Theorem \ref{Thm: nonhol} is valid only if $t\neq 0$. From the definition of Lucas atoms we have $P_1(s,0)=1$ and $P_n(s,0)=s^{\varphi(n)}$ for $n\geq 2$. Then it is quite interesting to check for which value of $s$ the sequence $(P_n(s,0))_{n\geq 1}$ is (polynomially) recurrent.

\begin{Conjecture}
    Let $s\in\mathbb Z$. Then the sequence $(P_n(s,0))_{n\geq 1}$ is polynomially recurrent if and only if $s\in\{-1,0,1\}$.
\end{Conjecture}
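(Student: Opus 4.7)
The plan is to proceed by a case analysis on $s$, following the general strategy of Theorem~\ref{Thm: nonhol}. The ``if'' direction is a direct verification: the sequences $(P_n(0,0))_{n\geq 1}=(1,0,0,\ldots )$ and $(P_n(1,0))_{n\geq 1}=(1,1,1,\ldots )$ are trivially holonomic, and $(P_n(-1,0))_{n\geq 1}=(1,-1,1,1,1,\ldots )$ is eventually constant equal to $1$ because $\varphi(n)$ is even for every $n\geq 3$. For the ``only if'' direction, fix $s\in\mathbb Z$ with $|s|\geq 2$ and assume, for contradiction, that $(s^{\varphi(n)})_{n\geq 1}$ satisfies a polynomial recurrence with coefficients $G_0,\ldots ,G_l\in\mathbb Q[X]$. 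Choose a prime $p$ dividing neither $s(s^2-1)$ nor any denominator of the coefficients of the $G_j$'s; then $s\not\equiv 0,\pm 1\pmod p$, so the multiplicative order $d:=\operatorname{ord}_p(s)$ is at least $3$. The pigeon-hole argument from the proof of Theorem~\ref{Thm: nonhol} shows that $(s^{\varphi(n)}\bmod p)_{n\geq 1}$ is eventually periodic, hence there exist $T,N\in\mathbb N_+$ with
\[\varphi(n+T)\equiv\varphi(n)\pmod{d}\qquad\text{for every }n\geq N.\]

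The heart of the proof is to contradict this congruence via Dirichlet's theorem on primes in arithmetic progressions. Using Dirichlet I would pick arbitrarily large primes $q_1,q_2,q_3$ satisfying: $q_1\not\equiv 1\pmod{d}$ and $\gcd(q_1,T)=1$; $q_2\equiv 1\pmod{\operatorname{lcm}(T,d)}$; and $q_3\equiv q_1\pmod T$ with $q_3\neq q_2$. Such primes exist because $\varphi(d)\geq 2$ for $d\geq 3$ leaves at least one coprime residue class modulo $d$ distinct from $1$, and every coprime residue class of any integer contains infinitely many primes. Setting $n_1:=q_1$ and $n_2:=q_2q_3$ one has $n_2\equiv 1\cdot q_1\equiv n_1\pmod T$, whereas $\varphi(n_1)=q_1-1\not\equiv 0\pmod d$ while $\varphi(n_2)=(q_2-1)(q_3-1)\equiv 0\pmod d$, because $d\mid q_2-1$. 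Picking $q_1,q_2,q_3$ large enough that $n_1,n_2\geq N$ and iterating the supposed periodicity from $n_1$ up to $n_2$ then forces $\varphi(n_1)\equiv\varphi(n_2)\pmod d$, the desired contradiction.

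The main obstacle is to get the right use of the \emph{multiplicativity} of $\varphi$: when $d\mid T$, a pair of primes in the same residue class modulo $T$ automatically share their $\varphi$-value modulo $d$, so the prime-index argument of Theorem~\ref{Thm: nonhol} does not adapt as such. The semiprime $n_2=q_2q_3$, with one factor congruent to $1$ modulo $\operatorname{lcm}(T,d)$, is chosen precisely to force $\varphi(n_2)\equiv 0\pmod d$ regardless of how $d$ relates to $T$, and this is really the new ingredient. Once this is in place, verifying the divisibility conditions and that $n_1,n_2\geq N$ can be arranged is a routine application of Dirichlet and CRT.
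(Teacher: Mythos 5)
There is no proof in the paper to compare against: this statement is left as an open conjecture, the authors only remarking that the ``if'' part is obvious. Your proposal is therefore not a variant of the paper's argument but an actual attempt at the open ``only if'' direction, and as far as I can check it is essentially correct. Your ``if'' part is the same trivial verification the authors allude to (for $s=-1$ one uses that $\varphi(n)$ is even for $n\geq 3$). For the ``only if'' part you correctly observe that the pigeonhole step in the proof of Theorem \ref{Thm: nonhol} never uses $t\neq 0$ (that hypothesis only enters when the valuation theorems are invoked), so for $p\nmid s(s^2-1)$ and $p$ prime to the denominators of the $G_j$ the sequence $(s^{\varphi(n)}\bmod p)_{n\geq 2}$ is eventually periodic with some period $T$, whence $\varphi(n+T)\equiv\varphi(n)\pmod{d}$ eventually, with $d=\operatorname{ord}_p(s)\geq 3$. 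The genuinely new ingredient, absent from the paper, is your proof that $\varphi\bmod d$ cannot be eventually periodic for $d\geq 3$: the semiprime $n_2=q_2q_3$ with $q_2\equiv 1\pmod{\operatorname{lcm}(T,d)}$ and $q_3\equiv q_1\pmod{T}$ forces $\varphi(n_2)\equiv 0\pmod d$ while $\varphi(n_1)=q_1-1\not\equiv 0\pmod d$ and $n_2\equiv n_1\pmod T$, and this trick is indeed necessary exactly when $d\mid T$, as you say. Two details to tighten in a write-up: the existence of a prime $q_1$ with $q_1\not\equiv 1\pmod d$ and $\gcd(q_1,T)=1$ is not literally a CRT statement when $\gcd(d,T)>1$; argue instead via the surjectivity of the reduction map $\left(\mathbb Z/\operatorname{lcm}(d,T)\mathbb Z\right)^*\to\left(\mathbb Z/d\mathbb Z\right)^*$ together with $\varphi(d)\geq 2$. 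Also state explicitly that you arrange $n_2>n_1\geq\max(N,2)$ so the congruence $\varphi(n+T)\equiv\varphi(n)\pmod d$ can be iterated from $n_1$ to $n_2$, and that ``polynomially recurrent'' is taken in the same normalized form as in Theorem \ref{Thm: nonhol} (coefficient of $P_n$ equal to $1$), which is what your reduction modulo $p$ requires.
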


The ``if" part of the above conjecture is obvious. Its burden is contained in the ``only if" part.

\section*{Acknowledgments}
We would like to thank Carlo Sanna for making us aware of the Stewart paper \cite{STE1} and Christian Ballot for providing us the errata corrige of his paper.\\
The first, the third and the fourth author are members of GNSAGA of INdAM. The research of the second author is supported by the Grant of the Polish National Science Centre No. UMO-2019/34/E/ST1/00094. The research cooperation was funded by the program Excellence Initiative – Research University at the Jagiellonian University in Kraków.

\end{document}